\newtheorem{theorem}{Theorem}[subsection]
\newtheorem*{theorem*}{Theorem}
\newtheorem{lemma}[theorem]{Lemma}
\newtheorem{corollary}[theorem]{Corollary}
\newtheorem{proposition}[theorem]{Proposition}
\newtheorem{definition}[theorem]{Definition}
\newtheorem{example}[theorem]{Example}
\newtheorem{remark}[theorem]{Remark}
\newcommand{\C}{\mathbb{C}}
\newcommand{\Z}{\mathbb{Z}}
\newcommand{\gl}{\mathfrak{gl}}
\renewcommand{\k}{\mathds{k}}
\newcommand{\0}{\bar{0}}
\newcommand{\1}{\bar{1}}
\newcommand{\BasisB}{\mathcal{B}}
\newcommand{\parity}[1]{\overline{#1}}
\newcommand{\Id}{\operatorname{Id}}
\newcommand{\wreath}{\wr}
\newcommand{\bv}{\mathbf{v}}
\newcommand{\SdwreathA}{S_{d} \wreath A}
\newcommand{\Hom}{\operatorname{Hom}}
\newcommand{\bi}{\mathbf{i}}
\newcommand{\bj}{\mathbf{j}}
\newcommand{\ba}{\mathbf{a}}
\newcommand{\bb}{\mathbf{b}}
\newcommand{\bc}{\mathbf{c}}
\newcommand{\bepsilon}{\boldsymbol{\varepsilon}}
\newcommand{\bone}{\mathbf{1}}
\newcommand{\End}{\operatorname{End}}
\newcommand{\Itwo}{\tilde{I}^{2}(n,d)}
\newcommand{\Cliff}{\operatorname{Cliff}}
\newcommand{\Pol}{\mathcal{P}}
\newcommand{\fq}{\mathfrak{q}}
\newcommand{\fh}{\mathfrak{h}}
\newcommand{\tI}{\tilde{I}}
\numberwithin{equation}{subsection}
\title[Schur--Weyl Equivalences for Wreath Product Superalgebras]{Schur--Weyl Equivalences for Wreath Product Superalgebras}
\author{Lauren Grimley}
\address{Department of Mathematical Sciences,
		University of Oklahoma,
		Norman, OK 73069, USA}
\email{lgrimley@ou.edu}
\author{Jonathan R.\  Kujawa}
\address{Department of Mathematics \\
		Oregon State University \\
		Corvallis, OR 97331, USA}
\email{kujawaj@oregonstate.edu}
\thanks{JRK was supported in part by Simons Foundation Grant No.\ 525043 and No.\ SFI-MPS-TSM-00014272.}
\date{\today}
\subjclass[2020]{Primary 17B10, 16G99. Secondary 20B30.}
\begin{document}

\begin{abstract} Let $A$ be an associative superalgebra over a field of characteristic zero.  Let $n \geq d+1$.  The main result of the paper establishes an equivalence between the category of supermodules for the wreath product $\SdwreathA$ and an explicitly defined category of supermodules for the general linear Lie superalgebra $\gl_{n}(A)$.  We also give an example showing the bound $n \geq d+1$ cannot be improved.
\end{abstract}

\keywords{wreath product superalgebras, Lie superalgebras, Schur--Weyl duality}


\maketitle


\section{Introduction}

\subsection{Overview} Schur--Weyl duality has played a fundamental role in representation theory for more than century.  It provides an equivalence of categories between the complex representations of the symmetric group on $d$ letters, $S_{d}$, and the polynomial representations of degree $d$ for the general linear Lie algebra, $\gl_{n}(\C)$, whenever $n \geq d$.  Due to its importance, Schur--Weyl duality has been generalized to a wide variety of settings.  Relevant to this paper, these generalizations include when $\gl_{n}(\C)$ is replaced with affine and toroidal Lie algebras, or with the type $Q$ Lie superalgebra \cite{Flicker,CGLMUW,Sergeev}.  The symmetric group is then replaced with the affine symmetric group or Sergeev superalgebra, respectively.  The purpose of the present paper is to provide a uniform proof that includes all of these, along with many additional cases of interest.

\subsection{Main Result}  Let $\k$ be a characteristic zero field and let $A$ be a unital associative superalgebra over $\k$.  The prefix ``super'' indicates that $A$ is $\Z_{2}= \Z /2\Z$-graded.  Every algebra can be viewed as a superalgebra concentrated in parity $\0$, but including other superalgebras allows us to cover the finite and affine type Q Lie superalgebras, among others.

For $d \geq 1$, let $\SdwreathA$ denote the wreath product superalgebra of $\k S_{d}$ and $A$.  Wreath product superalgebras naturally generalize many of the nice features of the symmetric groups and have long been of interest.    On the Lie algebra side, for $n \geq 1$, let $\gl_{n}(A)$ denote the Lie superalgebra over $\k$ consisting of $n \times n$-matrices with entries from $A$ and Lie bracket given by the supercommutator.  This also includes many examples of interest; see Section~\ref{SS:Examples}.   We refer the reader to Sections~\ref{SS:LieSuperalgebras}~and~\ref{SS:WreathProductSuperalgebras} for details on the wreath product and Lie superalgebras.


Let $V_{n}= A^{\oplus n}$, viewed as column vectors of height $n$ with entries from $A$.  This is a $\gl_{n}(A)$-supermodule via left matrix multiplication.  The $d$-fold tensor product $V_{n}^{\otimes d}$ is then a $U(\gl_{n}(A))$-supermodule via the coproduct on the universal enveloping superalgebra $U(\gl_{n}(A))$.  It also has a natural right action by $\SdwreathA$.  These actions commute, making $V_{n}^{\otimes d}$ a $(U(\gl_{n}(A)), \SdwreathA )$-bisupermodule.  Consequently, there is a functor, 
\[
F^{A}_{n,d}: \SdwreathA \text{-supermodules} \to U(\gl_{n}(A))\text{-supermodules},
\] given by $F^{A}_{n,d}(N) = V_{n}^{\otimes d} \otimes_{\SdwreathA} N$.

In Definition~\ref{D:polynomial} we define a full subcategory, $\Pol_{d}(\gl_{n}(A))$, of $U(\gl_{n}(A))$-supermodules consisting of supermodules which are polynomial representations of degree $d$ when restricted to the subalgebra $\gl_{n}(\k) \subseteq \gl_{n}(A)$ and also satisfy a certain condition that examples show is necessary.  The image of $F^{A}_{n,d}$ is easily seen to land in $\Pol_{d}(\gl_{n}(A))$.  Our main theorem is the following result.

\begin{theorem*}\label{T:IntroMainTheorem} If $n \geq d+1$, then the functor 
\[
F^{A}_{n,d}: \SdwreathA \text{-supermodules} \to \Pol_{d}(\gl_{n}(A))
\] is an equivalence of categories.
\end{theorem*}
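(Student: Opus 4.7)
The plan is to realize $F^{A}_{n,d}$ as a super Morita equivalence induced by the $(U(\gl_{n}(A)), \SdwreathA)$-bisupermodule $V_{n}^{\otimes d}$. Set $S^{A}(n,d) := \End_{\SdwreathA}\bigl(V_{n}^{\otimes d}\bigr)$; the left $U(\gl_{n}(A))$-action on $V_{n}^{\otimes d}$ provides a canonical superalgebra homomorphism $\varphi \colon U(\gl_{n}(A)) \to S^{A}(n,d)$. The proof rests on three pieces: a projective-generator property for $V_{n}^{\otimes d}$, an identification of $\Pol_{d}(\gl_{n}(A))$ with $S^{A}(n,d)$-supermodules, and Morita equivalence.

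\textbf{Morita ingredient.} I would first show that $V_{n}^{\otimes d}$ is a finitely generated projective generator as a right $\SdwreathA$-supermodule. Writing $V_{n}^{\otimes d} = \bigoplus_{\mathbf{i} \in \{1,\dots,n\}^{d}} A^{\otimes d}\cdot e_{\mathbf{i}}$ and grouping by $S_{d}$-orbits of tuples, each orbit summand is isomorphic to a supermodule induced from a parabolic $S_{\mathbf{i}} \wr A \subseteq \SdwreathA$ (where $S_{\mathbf{i}}$ is the stabilizer), hence projective. When $n \geq d$ there exist tuples with pairwise distinct entries; their orbit summand is isomorphic to the regular $\SdwreathA$-supermodule, which yields the generator property. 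Morita's theorem (in its super incarnation) then promotes the functor $V_{n}^{\otimes d} \otimes_{\SdwreathA} -$ to an equivalence between $\SdwreathA$-supermodules and $S^{A}(n,d)$-supermodules.

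\textbf{Polynomial category identification.} I would next show that $S^{A}(n,d)$-supermodules coincide with $\Pol_{d}(\gl_{n}(A))$ under pullback along $\varphi$. Since every object of $\Pol_{d}(\gl_{n}(A))$ restricts to a polynomial $\gl_{n}(\k)$-module of degree $d$, its $\gl_{n}(\k)$-action already factors through the classical Schur algebra $S(n,d)$ by Schur's density theorem. It therefore suffices to prove that $\varphi$ is surjective onto $S^{A}(n,d)$ and to verify that the mild technical condition in Definition~\ref{D:polynomial} picks out exactly the $A^{\otimes d}$-extension of $S(n,d)$ inside $S^{A}(n,d)$.

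The main obstacle, and the precise place where the hypothesis $n \geq d+1$ enters, is the surjectivity of $\varphi$: a super density theorem asserting that symmetrizations of rank-one operators $X \otimes \cdots \otimes X$ with $X \in \gl_{n}(A)$ span $S^{A}(n,d)$. Classical Schur--Weyl requires only $n \geq d$, but once $A \neq \k$ one must also separate the diagonal $A^{\otimes d}$-component from the combinatorial $S_{d}$-action via a polarization/multi-linearity argument that needs one extra ``spare'' index, producing the bound $n \geq d+1$. The counterexample promised in the abstract confirms this bound is sharp.
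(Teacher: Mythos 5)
Your overall framework (Morita equivalence via the bisupermodule $V_{n}^{\otimes d}$, plus an identification of $\Pol_{d}(\gl_{n}(A))$ with $S^{A}(n,d)$-supermodules) is a legitimate alternative to the paper's route, and the Morita ingredient is fine: each $S_{d}$-orbit summand of $V_{n}^{\otimes d}$ is of the form $e\cdot(\SdwreathA)$ for a symmetrizing idempotent $e$, hence projective, and for $n\geq d$ the distinct-entry orbit is free of rank one, so $V_{n}^{\otimes d}$ is a projective generator and $V_{n}^{\otimes d}\otimes_{\SdwreathA}-$ is an equivalence onto $S^{A}(n,d)$-supermodules. (The paper does not argue this way in the main proof; it instead proves faithfulness and fullness directly from a key injectivity lemma, and only mentions the Schur algebra $S^{A}(n,d)$ in the closing discussion.)

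The genuine gap is in your second step, and you have also located the role of the hypothesis $n\geq d+1$ in the wrong place. Surjectivity of $\varphi\colon U(\gl_{n}(A))\to S^{A}(n,d)$ is not the obstacle: it holds already for $n=d$ (the paper cites \cite[Lemma 11.1.1]{DKMZ} with no extra hypothesis), and it only tells you that $S^{A}(n,d)$-supermodules form a \emph{full subcategory} of $U(\gl_{n}(A))$-supermodules under inflation. What you actually need --- and what you dismiss as ``verify that the mild technical condition picks out exactly the $A^{\otimes d}$-extension of $S(n,d)$'' --- is that \emph{every} object $M$ of $\Pol_{d}(\gl_{n}(A))$ has its $U(\gl_{n}(A))$-action factoring through $\varphi$, i.e.\ that the two axioms of Definition~\ref{D:polynomial} force the $A$-part of the action to descend to $S^{A}(n,d)$. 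This is precisely the essential-surjectivity half of the theorem and is where the entire difficulty lies; Schur's density theorem controls only the $\gl_{n}(\k)$-part, not the action of the elements $E^{a}_{r,s}$ with $a\notin\k$. It is also exactly where $n\geq d+1$ enters: Example~\ref{Ex:Example2} produces, at $n=d$, an object of $\Pol_{d}(\gl_{d}(A))$ (a non-split self-extension) that does \emph{not} factor through $S^{A}(n,d)$, even though $\varphi$ is surjective and the Morita equivalence holds there. The paper closes this gap by explicitly constructing operators $\alpha^{a}_{k}$ on the underlying $\k S_{d}$-module from the weight-space structure of $M$, proving they are independent of all choices (this independence argument is what consumes the spare index $n\geq d+1$), and assembling them into an $\SdwreathA$-action; your proposal contains no substitute for this construction.
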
 

When $A = \k [t_{1}^{\pm 1}, \dotsc , t_{r}^{\pm 1}]$ or $A$ is the Clifford superalgebra on one generator, then we essentially recover the aforementioned generalizations of classical Schur--Weyl duality.  Our result shows that we can further generalize to an arbitrary associative $\k$-superalgebra.  We also note that Example~\ref{Ex:Example2} shows that the bound $n \geq d+1$ is sharp.  Thus, in some sense, this is the most general result along these lines that one could hope for.

Our general approach is similar to that of \cite{Flicker,CGLMUW}: we prove the equivalence by bootstrapping from the classical Schur--Weyl equivalence.    In comparing our arguments to theirs we find an instance of George P\'{o}lya's Inventor's Paradox: it turns out that our more general situation makes for easier proofs. As far as we are aware, this bootstrapping technique goes back to Chari--Pressley \cite{ChariPressley}.

\subsection{Future Directions}\label{SS:FutureDirections}  Polynomial representations of $\gl_{n}(\k )$ play an important role in representation theory and Lie theory.  For well-chosen $A$, the category $\Pol_{d}(\gl_{n}(A))$ has desirable qualities (e.g., see Corollary~\ref{C:quasi-hereditary}) and would be interesting to study.

There is a Schur--Weyl equivalence between the Lie superalgebra $\gl_{m|n}(\C)$ and the symmetric group \cite{BereleRegev,Sergeev}.  We fully expect the methods used here should generalize to this case and yield an equivalence of categories between certain $\gl_{m|n}(A)$-supermodules and $\SdwreathA$-supermodules.  See \cite{Flicker2} where something like this is done for $A=\k[t^{\pm 1}]$.

Classical Schur--Weyl duality holds for any infinite field, and one could hope to prove a positive characteristic analogue of the main theorem.  Properly formulating a definition of the category $\Pol_{d}(\gl_{n}(A))$ will likely require working with an appropriate integral form for $U(\gl_{n}(A))$.  The existence of integral forms for the universal enveloping superalgebras of affine and map Lie (super)algebras suggests this should be possible \cite{BagciChamberlin,Chamberlin,Garland}.  The representation theory of quantized and/or affinized wreath product superalgebras and their associated quantized Schur algebras is likewise interesting.  See \cite{LNX,RossoSavage-Wreath,Savage,WanWang} and the references therein.  It would be interesting to obtain a quantized and/or affinized version of the main theorem.  See \cite{ChariPressley} for the quantized affine case.

\subsection{Acknowledgements}\label{SS:Acknowledgements} The authors thank Georgia Benkart for her myriad contributions to mathematics, many of which relate to this work.

\section{Preliminaries} 

\subsection{Notation and Conventions} For $t \geq 1$, let $[1,t] = \{1, \dotsc ,t \}$. For $n, d \geq 1$, let 
\[
I(n,d) = \left\{\bi = (i_{1}, \dotsc , i_{d}) \mid i_{1}, \dotsc , i_{d} \in [1,n] \right\}
\] denote the set of $d$-tuples with entries from $[1,n]$.
 Let $\tI(n,d) \subseteq I(n,d)$ be the subset of tuples that have pairwise distinct entries.      Let
\[
 \Itwo = \left\{(\bj, \bi)_{k}  \mid (\bj, \bi) \in \tI(n,d) \times \tI(n,d), k\in [1,d], \text{ $i_{t}=j_{t}$ for $t \neq k$} \right\}.
\]  Note that the entries $j_{k}$ and $i_{k}$ may or may not be equal for a pair $(\bj,\bi)_{k}$ in $\Itwo$.

Throughout, let $\k$ be a field of characteristic zero and let $A = A_{\0} \oplus A_{\1}$ be a unital associative $\k$-superalgebra.  We assume the reader is familiar with basic notions for superalgebras, supermodules, and other $\Z_{2}=\Z /2\Z$-graded structures, and we only make remarks as needed to establish our notation and our conventions.  Let $\BasisB$ denote a fixed homogeneous basis for $A$.  We always assume the identity of $A$ is an element of $\BasisB$.  We identify $\k$ as a subalgebra of $A$ via $c \mapsto c1=c1_{A}$.  For $d \geq 1$, set 
\begin{align*}
I(\BasisB,d) &= \left\{\bb = (b_{1}, \dotsc , b_{d}) \mid b_{1}, \dotsc , b_{d} \in \BasisB  \right\}, \\
I(A,d) &= \left\{\ba = (a_{1}, \dotsc , a_{d}) \mid a_{1}, \dotsc , a_{d} \in A_{\0} \cup A_{\1} \right\}.
\end{align*}  For short, write $\bone$ for $(1,1,\dotsc ,1) \in I(\BasisB,d)$.

Given a $\k$-superspace $W =  W_{\0}\oplus W_{\1}$  and a homogeneous element $w \in W$, we write $\parity{w} \in \Z_{2}$ for the parity of $w$.  For short, we may call $v$ \emph{even} (resp., \emph{odd}) if $\parity{w}=\0$ (resp., $\parity{w}=\1$). Given $\k$-superspaces $U$ and $W$, there is a natural $\Z_{2}$-grading on $U\otimes W$ given by declaring $(U\otimes W)_{t} = \oplus_{r+s = t} U_{r}\otimes W_{s}$ for $r,s,t \in \Z_{2}$.  Here and elsewhere, $-\otimes-=-\otimes_{\k}-$.

\subsection{Lie Superalgebras}\label{SS:LieSuperalgebras}
Let \(\gl_{n}(A)\) be the $\k$-superspace of $n \times n$ matrices with entries in $A$.  For $r,s \in [1,n]$ and $a \in A$, write  \(E_{r,s}^{a}\) for the $n \times n$ matrix with $(r,s)$-entry equal to $a$ and all other entries equal to zero.  The $\Z _{2}$-grading on $\gl_{n}(A)$ is given by declaring $\overline{E_{r,s}^{a}} = \overline{a}$ for all homogeneous $a \in A$.

The set of matrices $\{E_{r,s}^{b} \mid r,s \in [1,n] \text{ and } b \in \BasisB\}$ form a homogeneous $\k$-basis for $\gl_{n}(A)$.  We view $\gl_{n}(A)$ as a Lie superalgebra over $\k$ via the supercommutator:
\begin{equation}\label{E:supercommutator}
[E_{r_{1},s_{1}}^{a_{1}}, E_{r_{2},s_{2}}^{a_{2}}] = \delta_{s_{1}, r_{2}} E_{r_{1},s_{2}}^{a_{1}a_{2}} - (-1)^{\parity{a_{1}}\cdot \parity{a_{2}}} \delta_{s_{2}, r_{1}} E_{r_{2},s_{1}}^{a_{2}a_{1}},
\end{equation}
for all \(r_{1},s_{1}, r_{2}, s_{2} \in [1,n]\) and homogeneous \(a_{1}, a_{2} \in A\).  As we did here, we often only give definitions and formulas on homogeneous elements.  The general case is obtained by extending linearly.

Observe that the general linear Lie algebra $\gl_{n}(\k)$ naturally appears as Lie subsuperalgebra of $\gl_{n}(A)$ consisting of the matrices with entries from $\k$.
 Likewise, if we write \(U(\gl_{n}(A))\) for the universal enveloping superalgebra of \(\gl_n(A)\), then $U(\gl_{n}(\k))$ is a subsuperalgebra of $U(\gl_{n}(A))$.  Both $\gl_{n}(\k )$ and $U(\gl_{n}(\k))$ are purely even.

For short, write $h_{i} = E^{1}_{i,i}$ for $i=1, \dotsc ,n$, and let $\fh$ denote the Cartan subalgebra of $\gl_{n}(\k)$ with basis $h_{1}, \dotsc , h_{n}$.  Let $\varepsilon_{i} \in \fh^{*}= \Hom_{\k}(\fh, \k )$ be the linear functional determined by $\varepsilon_{i}(h_{j}) = \delta_{i,j}$.  Let $\Lambda(n) = \oplus_{i=i}^{n} \Z_{\geq 0}\varepsilon_{i}$.  For $d \geq 0$, let 
\[
\Lambda(n,d) = \left\{\lambda = \sum_{i=1}^{n} \lambda_{i}\varepsilon_{i} \in \Lambda(n) \; \left| \; \sum_{i=1}^{n}\lambda_{i} = d \right.\right\}.
\]
For short, given $\bi = (i_{1}, \dotsc , i_{d}) \in I(n,d)$, write $\bepsilon_{\bi} = \sum_{k=1}^{d} \varepsilon_{i_{k}} \in \Lambda(n,d)$.

\subsection{Examples}\label{SS:Examples} The above setup encompasses a number of cases of interest.  We give a few examples.

If $A= \k$, then $\gl_{n}(\k)$ is the general linear Lie algebra.  If $A = \Cliff:=\k[c]/(c^{2}-1)$ is the Clifford superalgebra on one odd generator, then $\gl_{n}(A) \cong \fq_{n}(\k)$ is the type Q Lie superalgebra. If $\Lambda = \k [x]/(x^{2})$ is the Grassmann algebra on one odd generator, then the Lie superalgebra $\mathfrak{sl}_{2}(\Lambda)$ and its representations appear in the context of annular Khovanov homology \cite{GrigsbyLicataWehrli}.

If $A = \k[t_{1}, \dotsc , t_{k}]$ or $A = \k[t_{1}^{\pm 1}, \dotsc , t_{k}^{\pm 1}]$, then $\gl_{n}(A)$ is the general linear (multi)current Lie algebra or (multi)loop Lie algebra, respectively.  The representations of (multi)loop Lie algebras are closely related to the level zero representations of the general linear affine and toroidal Lie algebras.  Current algebras and their representations are central to the study of Yangians and, more recently, have appeared in the trace decategorifications of categorified quantum groups \cite{MolevBook,BHLW,SVV}.  Generalizing the loop and current Lie algebras, if $A$ is a finitely-generated commutative $\k$-algebra, then $\gl_{n}(A)$ is a so-called map Lie algebra and its representations are an area of active research.  A noncommutative example of this kind is when $A = \k_{q}$, a quantum torus. In this case, $\mathfrak{sl}_{n}(\k_{q})$ appears as the core of the elliptic quasi-simple Lie algebras in type A \cite{BGK}. 

Given a superalgebra $A$ and a superalgebra involution $\iota: A \to A$, one can define an associated superalgebra $\Cliff (A)$, which equals $A \otimes \Cliff$ as a superspace and has product given by $(a_{1} \otimes c^{k_{1}})(a_{2} \otimes  c^{k_{2}}) = (-1)^{\parity{a_{2}} \cdot \parity{k_{1}}} a_{1}\iota^{k_{1}}(a_{2}) \otimes c^{k_{1}+k_{2}}$ for homogeneous $a_{1}, a_{2} \in A$ and $k_{1}, k_{2} \in \{0,1 \}$.  Then one could define $\fq_{n}(A)$ as $\gl_{n}\left( A \otimes \Cliff \right)$.  For example, taking $A = \k$ and $\iota$ to be the identity map recovers the case $\mathfrak{q}_{n}(\k ) \cong \gl_{n}(\Cliff)$ mentioned previously.  Taking $A = \k [t]$ or $A = \k [t^{\pm 1}]$ and $\iota: A \to A$ determined by $t \mapsto -t$, or $A = \k [t^{\pm 1}]$ and $\iota: A \to A$ determined by $t \mapsto t^{-1}$ recovers the twisted Lie superalgebras of type Q studied by Nazarov \cite{Nazarov} and Chen--Guay \cite{ChenGuay}.  The Lie superalgebra $\gl_{n}(A)$ with $A = \k\langle u^{\pm 1},c \rangle /(uu^{-1}=u^{-1}u=1, c^{2}=u, cu=uc)$ with $u$ even and $c$ odd is closely related to the loop algebra for the twisted affine type Q Lie superalgebra $\mathfrak{q}(n)^{(2)}$ studied in, for example, \cite{HoytSerganova,GorelikSerganova}.

\subsection{Representations}  A \emph{$\gl_{n}(A)$-supermodule} is a $\k$-superspace $M = M_{\0}\oplus M_{\1}$ which is a $U(\gl_{n}(A))$-module where the action respects the parity in that $x.m \in M_{\parity{x}+\parity{m}}$ for all homogeneous $x \in U(\gl_{n}(A))$ and $m \in M$.  There is a natural $\Z_{2}$-grading on $\Hom_{\k}(M,N)$ where $f : M\to N$ is declared to have parity $r \in \Z_{2}$ if $f(M_{s}) \subseteq N_{s+r}$ for all $s \in \Z_{2}$.  A homogeneous $\gl_{n}(A)$-supermodule homomorphism is a homogeneous map $f: M \to N$ which satisfies $f(x.m) = (-1)^{\parity{f}\cdot\parity{x}}x.f(m)$ for all homogeneous $x \in U(\gl_{n}(A))$ and $m \in M$.  A general $\gl_{n}(A)$-supermodule homomorphism is then a $\k$-linear map $f:M \to N$ that can be written $f = f_{\0} + f_{\1}$ where $f_{r}$ is a homogeneous $\gl_{n}(A)$-supermodule homomorphism of parity $r\in \Z_{2}$.  In particular, note that we do not assume supermodule homomorphisms preserve parity.

If $M$ is a $\gl_{n}(A)$-supermodule, $m \in M$ is called a \emph{weight vector}  of \emph{weight} $\lambda \in \fh^{*}$ if
\[
h_{i}m = \lambda(h_{i})m
\] for all $i=1, \dotsc , n$.  We write $M_{\lambda}$ for the subspace of all weight vectors of weight $\lambda$, and call $M$ a \emph{weight module} if $M = \oplus_{\lambda \in \fh^{*}} M_{\lambda}$.  The supercommutator formula \eqref{E:supercommutator} implies that 
\[
E_{r,s}^{a}M_{\lambda} \subseteq M_{\lambda + \varepsilon_{r}-\varepsilon_{s}}
\] for all $\lambda \in \fh^{*}$, $1 \leq r,s \leq  n$,  and homogeneous $a \in A$.

Set $V_{n}= A^{\oplus n}$ to be column vectors of height $n$ with entries from $A$.  This is naturally a $\gl_{n}(A)$-supermodule with $\Z_{2}$-grading given by $V_{n, r} = A_{r}^{\oplus n}$ for $r \in \Z_{2}$, and with action given by matrix multiplication.  Since  $U(\gl_{n}(A))$ is a Hopf superalgebra there is a natural action on $V_{n}^{\otimes d}$ for all $d \geq 1$.  We follow the convention that $V_{n}^{\otimes 0} = \k$ is the trivial supermodule.

For $i \in [1,n]$ and homogeneous $a \in A$, write $v_{i}^{a}$ for a column vector with $a$ in the $i$th row, and zeros elsewhere. The parity of $v_{i}^{a}$ is $\parity{a}$.  Using this notation, 
\[
\left\{v_{\bi}^{\bb} := v_{i_{1}}^{b_{1}}\otimes \dotsb \otimes v_{i_{d}}^{b_{d}} \mid \bi \in I(n,d),  \bb \in I(\BasisB,d)  \right\}
\] is a homogeneous $\k$-basis for $V_{n}^{\otimes d}$.  From this it is easy to see that $V_{n}^{\otimes d}$ is a weight module, $v_{i_{1}}^{b_{1}}\otimes \dotsb \otimes v_{i_{d}}^{b_{d}} \in \left(V_{n}^{\otimes d} \right)_{\bepsilon_{\bi}}$, and that the weights of $V_{n}^{\otimes d}$ lie in $\Lambda(n,d)$.

Write $E_{n} = \k^{\oplus n}$ for the natural representation of $\gl_{n}(\k)$; that is, $E_{n}$ consists of column vectors of height $n$ with entries from $\k$ where $\gl_{n}(\k)$ acts by matrix multiplication.  Recall that, for $d \geq 0$, the irreducible polynomial representations of degree $d$ of $\gl_{n}(\k )$ are, by definition, the irreducible representations which appear as a composition factor of $E_{n}^{\otimes d}$.  

For the purposes of this paper, a polynomial representation of degree $d$ for $U(\gl_{n}(\k))$ will mean a $U(\gl_{n}(\k ))$-supermodule which is isomorphic to a (possibly infinite) direct sum of irreducible polynomial representations of degree $d$.  In particular, the category of polynomial representations of degree $d$ for $U(\gl_{n}(\k ))$ is a semisimple category.  If $M$ is a polynomial representation of degree $d$ for $U(\gl_{n}(\k))$, then $M$ is a weight module with 
\[
M = \bigoplus_{\lambda \in \Lambda(n,d)} M_{\lambda}.
\]

\begin{definition}\label{D:polynomial}  For $d \geq 0$, let $\Pol_{d} = \Pol_{d}(\gl_{n}(A))$ be the full subcategory of $U(\gl_{n}(A))$-supermodules $M$ which satisfy the following conditions:
\begin{enumerate}
\item the restriction of $M$ to $U(\gl_{n}(\k ))$ is isomorphic to a polynomial representation of degree $d$;
\item for all $i \in [1,n]$, if $m \in M_{\lambda}$ with $\lambda (h_{i}) =0$, then $E_{i,i}^{a}m =0$ for all homogeneous $a \in A$.
\end{enumerate}  
\end{definition} \noindent   Observe that $V_{n}^{\otimes d}$ and subquotients of $V_{n}^{\otimes d}$ are objects in $\Pol_{d}$.

\begin{remark} Readers familiar with classical Schur--Weyl duality will not be surprised by the first condition in Definition~\ref{D:polynomial}. The second condition already appears in the study of polynomial representations of $\fq_{n}(\k) = \gl_{n}(\Cliff)$ \cite[Definition 1.5]{GJKK}. Furthermore, if $N$ is a $\SdwreathA$-supermodule, then the $U(\gl_{n}(A))$-supermodule $V_{n}^{\otimes d} \otimes_{\SdwreathA} N$ satisfies both conditions.  That is, the image of the functor $F_{n,d}^{A}$ lies in the category $\Pol_{d}(\gl_{n}(A))$.    

The reader may wonder if the second condition is either implied by the first condition or is otherwise an artifact of our argument. The next example shows that the second condition does not follow from being a polynomial representation of $U(\gl_{n}(\k ))$; Example~\ref{Ex:Example2} gives an example of where the supermodule satisfies the first condition but not the second, and the supermodule also fails to be in the image of the functor $F_{n,d}^{A}$. From this point of view, our main result demonstrates that Definition~\ref{D:polynomial} is an intrinsic characterization of the $\gl_{n}(A)$-supermodules which are in the image of the functor $F_{n,d}^{A}$. 
\end{remark}

\begin{example}\label{Ex:Example1}  Let $n  > 1$ and let $A = \k[x]/(x^{2}-1)$ be the purely even group algebra of the cyclic group of order $2$.  If $I_{n}$ denotes the $n\times n$ identity matrix, then $\gl_{n}(A) =  \mathfrak{sl}_{n}(A)  \oplus \k I_{n} \oplus \k xI_{n}$ as Lie algebras, where $I_{n}$ and $xI_{n}$ are both central.  In particular, as an algebra $U(\gl_{n}(A)) \cong U(\mathfrak{sl}_{n}(A) + \k I_{n})\otimes \k[xI_{n}]$.  Let $M=A$ with trivial action by $U(\mathfrak{sl}_{n}(A) + \k I_{n})$ and where $xI_{n}$ acts by multiplication by $x$. Then $M$ is a polynomial representation of $\gl_{n}(\k)$ of degree $0$.  Moreover, every vector in $M$ is of weight zero, but $M$ clearly fails the second condition for modules in $\Pol_{0}$.

As another example, let $W = V_{n} \otimes_{\k} M$, where the action on $M$ is as above, and the action of $U(\gl_{n}(A))$ on $W$ is via the coproduct.  Then $W$ is a polynomial representation of degree $1$ for $U(\gl_{n}(\k ))$.  Consider the vector $w:=v_{1}^{1} \otimes 1$ in $W$.  This vector has weight $\varepsilon_{1}$.  In particular, $\varepsilon_{1}(h_{n})=0$.  But $E_{n,n}^{nx} = xI_{n} - \sum_{i=1}^{n-1}i(E_{i,i}^{x}-E_{i+1,i+1}^{x})$ and so acts on $w$ by $E_{n,n}^{nx}.w =xI_{n}.w - \sum_{i=1}^{n-1}i(E_{i,i}^{x}-E_{i+1,i+1}^{x}).w = v_{1}^{x} \otimes 1 + v_{1}^{1}\otimes x - v_{1}^{x} \otimes 1= v_{1}^{1}\otimes x$. 
\end{example}

\begin{remark}  If $M$ is a polynomial representation of degree $d$ for $U(\gl_{n}(\k))$, then it is integrable and, hence, there is an action by the symmetric group on $n$ letters via the permutation matrices of $\operatorname{GL}_{n}(\k)$.  Using this action one sees that the second condition of Definition~\ref{D:polynomial} holds if and only if it holds for a single $i \in [1,n]$.
\end{remark}

\subsection{Wreath Product Superalgebras}\label{SS:WreathProductSuperalgebras}  Given associative $\k$-superalgebras $A$ and $B$, recall that $A \otimes B$ is a superalgebra with product given by 
\[
(a_{1} \otimes b_{1})(a_{2} \otimes b_{2}) = (-1)^{\parity{b_{1}} \cdot\parity{a_{2}}} (a_{1}a_{2})\otimes (b_{1}b_{2}).
\]  Iterating defines the superalgebra $A_{1} \otimes \dotsb \otimes  A_{d}$ for any $d \geq 1$.  In particular, this defines $A^{\otimes d}$ for any $d \geq 1$.  We follow the convention that $A^{\otimes 0}= \k$.

For $d \geq 0$, there is a right action on $A^{\otimes d}$ by the symmetric group on $d$ letters, $S_{d}$, by signed place permutation:
\[
(a_{1} \otimes a_{2} \otimes \dotsb \otimes a_{i} \otimes a_{i+1} \otimes \dotsb \otimes a_{d}) (i,i+1) = (-1)^{\parity{a_{i}}\cdot\parity{a_{i+1}}} a_{1} \otimes a_{2} \otimes \dotsb \otimes a_{i+1} \otimes a_{i} \otimes \dotsb \otimes a_{d}.
\]

For $d \geq 0$, let $S_{d} \wreath A$ denote the wreath product superalgebra.  It equals 
\[
\k S_{d} \otimes  A^{\otimes d} 
\] as a superspace, where $\k S_{d}$ is viewed as a superalgebra concentrated in parity $\0$.  The product structure is given by the formula 
\[
\left( \sigma \otimes a_{1} \otimes \dotsb \otimes  a_{d} \right)  \cdot \left(\sigma' \otimes a'_{1} \otimes \dotsb \otimes  a'_{d} \right) =  \sigma\sigma' \otimes \left((a_{1} \otimes \dotsb \otimes  a_{d})\sigma' \right) \left(a'_{1} \otimes \dotsb \otimes a'_{d} \right).
\]  In particular, $\k S_{d}$ and $A^{\otimes d}$ are isomorphic to subsuperalgebras of $\SdwreathA$ in obvious ways.


For $\bi \in I(n,d)$ and $\ba \in I(A,d)$, let $\bv_{\bi}^{\ba} = v_{i_{1}}^{a_{1}} \otimes \dotsb \otimes v_{i_{d}}^{a_{d}} \in V_{n}^{\otimes d}$.  Observe that  $V_{n}^{\otimes d}$ is a right $A^{\otimes d}$-supermodule via 
\[
\bv_{\bi}^{\ba} \left( c_{1}\otimes \dotsb \otimes c_{d}\right) = (-1)^{\gamma(\ba, \bc)}\bv_{\bi}^{\ba \cdot \bc},
\] for all $\bi \in I(n,d)$, $\ba \in I(A,d)$, and homogeneous $c_{1} \otimes \dotsb \otimes  c_{d} \in A^{\otimes d}$, where $\ba \cdot \bc := (a_{1}c_{1}, \dotsc , a_{d}c_{d})$ and where $\gamma(\ba, \bc) = \sum_{1 \leq p < q \leq d} \parity{a_{q}}\cdot\parity{c_{p}}$ is obtained by applying the rule of signs.  Under this action $V_{n}^{\otimes d}$ is a free right $A^{\otimes d}$-supermodule with basis $\left\{\bv^{\bone}_{\bi} \mid \bi \in I(n,d) \right\}$.  Also observe that this action can be extended to $S_{d} \wreath A$ by having the elements of $S_{d}$ act by signed place permutation.  It is straightforward to verify the actions of $U(\gl_{n}(A))$ and $\SdwreathA$ on $V_{n}^{\otimes d}$ mutually commute.

\subsection{The Schur--Weyl  Functor}   Since $V_{n}^{\otimes  d}$ is a $(U(\gl_{n}(A)), S_{d}\wreath A)$-bisupermodule,  there is a functor from left $S_{d}\wreath A$-supermodules to left $U(\gl_{n}(A))$-supermodules,
\[
F^{A}=F_{n,d}^{A}: S_{d} \wreath A-\text{supermodules} \to U(\gl_{n}(A))-\text{supermodules},
\] given on supermodules by $N \mapsto V_{n}^{\otimes d} \otimes_{S_{d}\wreath A} N$ and on morphisms by $f \mapsto \Id_{V_{n}^{\otimes d}} \otimes f$.

\begin{remark}\label{R:KeyRemark}  As a special case, note that $F^{\k}= F_{n,d}^{\k}$ is the classical Schur--Weyl functor between $\k S_{d}$-modules and $U(\gl_{n}(\k))$-modules.  We will repeatedly use the well-known fact that $F_{n,d}^{\k}$ gives an equivalence between the category of $\k S_{d}$-supermodules and the category of polynomial supermodules of degree $d$ for $U(\gl_{n}(\k))$ whenever $n \geq d$, where both $\k S_{d}$ and $U(\gl_{n}(\k ))$ are viewed as superalgebras concentrated in parity $\0$  \cite[Chapter 6]{GreenBook}. 
\end{remark}

Since $\k S_{d}$ is a subalgebra of $S_{d} \wreath A$ and $U(\gl_{n}(\k ))$ is a subalgebra of $U(\gl_{n}(A))$, we have corresponding restriction functors and, hence, a square of functors:
\begin{equation}\label{E:functor-square}
\begin{tikzcd}
 S_{d}\wreath A-\textrm{mod} \arrow{rr}{F_{n,d}^{A}} \arrow{d}{res_{\k S_{d}}^{S_{d} \wreath A}} &&U(\gl_{n}(A))-\textrm{mod} \arrow{d}{res_{U(\gl_{n}(\k))}^{U(\gl_{n}(A))}}\\
 \k S_{d}-\textrm{mod}  \arrow{rr}{F_{n,d}^{\k}} &&U(\gl_{n}(\k))-\textrm{mod}
\end{tikzcd}.
\end{equation}
As we next explain, this square commutes up to a canonical isomorphism.

Recall that we write $V_{n}$ for $A^{\oplus n}$ and $E_{n}$ for $\k^{\oplus n}$.  We will write $v_{1}, \dotsc , v_{n}$ for the standard $\k$-basis of $E$, where $v_{i}$ has a $1$ in the ith entry and zeros elsewhere.  Then $\left\{ \bv_{\bi} = v_{i_{1}}\otimes \dotsb \otimes v_{i_{d}} \mid \bi \in I(n,d) \right\}$ is a $\k$-basis for $E_{n}^{\otimes d}$. We identify $E_{n}^{\otimes d}$ as a $\k$-submodule of $V_{n}^{\otimes d}$ via $\bv_{\bi}\mapsto \bv_{\bi}^{\bone}$ for $\bi  \in I(n,d)$.

Because $V_{n}^{\otimes d}$ is a free $A^{\otimes d}$-supermodule with basis $\left\{ \bv_{\bi}^{\bone} \mid \bi \in I(n,d)  \right\}$, it follows that for any $\SdwreathA$-supermodule, $N$, the map 
\begin{equation}\label{E:identifyI}
 E_{n}^{\otimes d}  \otimes_{\k} N \to  V_{n}^{\otimes d}\otimes_{A^{\otimes d}} N 
\end{equation}
determined by $ \bv_{\bi} \otimes_{\k} n \mapsto \bv_{\bi}^{\bone} \otimes_{A^{\otimes d}}  n$ is a superspace isomorphism.  Going forward, we will identify these two superspaces via this map.

Both $E_{n}^{\otimes d}  \otimes_{\k} N$ and $V_{n}^{\otimes d}\otimes_{A^{\otimes d}} N$ are $\k S_{d}$-supermodules where $S_{d}$ acts diagonally on each.  The map given in \eqref{E:identifyI} is $S_{d}$-equivariant.  Hence, it is an isomorphism of $\k S_{d}$-supermodules and restricts to an isomorphism between the $S_{d}$-invariant subspaces.  With this observation we have the middle isomorphism in the following chain:
\begin{equation}\label{E:identifyII}
 E_{n}^{\otimes d}  \otimes_{\k S_{d}} N  \cong\left(  E_{n}^{\otimes d} \otimes_{\k} N \right)^{S_{d}}  \cong \left(  V_{n}^{\otimes d} \otimes_{A^{\otimes d}} N \right)^{S_{d}} \cong  V_{n}^{\otimes d}  \otimes_{\SdwreathA} N.
\end{equation}  The first and last isomorphisms are straightforward.  Observe that all three maps are $U(\gl_{n}(\k))$-supermodule isomorphisms.  Going forward, we will identify $E_{n}^{\otimes d}  \otimes_{\k S_{d}} N$ and $ V_{n}^{\otimes d}  \otimes_{\SdwreathA} N$ as $U(\gl_{n}(\k))$-supermodules via the composite of these maps.

In particular, using this identification one sees that the square \eqref{E:functor-square} commutes for objects: $res_{U(\gl_{n}(\k))}^{U(\gl_{n}(A))}\left( F_{n,d}^{A}(N)\right) = res_{\k S_{d}}^{S_{d} \wreath A}\left( F_{n,d}^{\k}(N)\right)$ for all $\SdwreathA$-supermodules $N$. Likewise, it is straightforward to see that the square commutes on morphisms.

Combining Remark~\ref{R:KeyRemark}, the commutativity of the square \eqref{E:functor-square}, and the fact that the restriction functors are faithful, immediately yields the following.

\begin{proposition}\label{P:SWisFaithful} The functor $F_{n,d}^{A}$ is faithful whenever $n \geq d$.
\end{proposition}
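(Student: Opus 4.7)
The plan is to deduce this directly from the commutative square \eqref{E:functor-square} and the classical Schur--Weyl equivalence recalled in Remark~\ref{R:KeyRemark}. Concretely, for $n \geq d$ the bottom horizontal functor $F_{n,d}^{\k}$ is an equivalence of categories between $\k S_{d}$-supermodules and polynomial representations of degree $d$ for $U(\gl_{n}(\k))$, and in particular it is faithful. Both vertical restriction functors are also manifestly faithful: a $\k$-linear map $f \colon M \to N$ is zero as a supermodule homomorphism for the big algebra if and only if it is zero as a supermodule homomorphism for the restricted action, since the underlying $\k$-linear map is the same.

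The argument is then a short diagram chase. Suppose $f \colon N \to N'$ is a morphism of $\SdwreathA$-supermodules with $F_{n,d}^{A}(f) = 0$. Applying $res_{U(\gl_{n}(\k))}^{U(\gl_{n}(A))}$ still gives zero. By the commutativity of \eqref{E:functor-square} on morphisms, which was established via the canonical identifications \eqref{E:identifyI} and \eqref{E:identifyII}, this composite agrees with $F_{n,d}^{\k}\bigl(res_{\k S_{d}}^{\SdwreathA}(f)\bigr)$. Faithfulness of $F_{n,d}^{\k}$ forces $res_{\k S_{d}}^{\SdwreathA}(f) = 0$, and faithfulness of the restriction functor then forces $f=0$.

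There is no real obstacle here: all of the genuine content, namely the verification that the square \eqref{E:functor-square} commutes on both objects and morphisms under the identifications \eqref{E:identifyI} and \eqref{E:identifyII}, has already been done in the preceding paragraphs. The only care needed is to remember that supermodule homomorphisms are not required to preserve parity in our conventions, so one should check faithfulness of restriction on each homogeneous component $f_{\0}$ and $f_{\1}$ separately; but this is immediate since each $f_{r}$ is itself a $\k$-linear map whose underlying data is unaffected by passing to the subalgebra.
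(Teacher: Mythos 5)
Your argument is correct and is exactly the paper's proof: the paper also deduces faithfulness of $F_{n,d}^{A}$ by combining Remark~\ref{R:KeyRemark}, the commutativity of the square \eqref{E:functor-square}, and the faithfulness of the restriction functors. Your write-up just spells out the same diagram chase in more detail.
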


\section{The Schur--Weyl Equivalence}

The goal of this section is to prove that the functor 
\[
F^{A}_{n,d}: S_{d}\wreath A-\textrm{mod} \to \Pol_{d}(\gl_{n}(A))
\] is an equivalence of categories when $n\geq d+1$.  As remarked after Definition~\ref{D:polynomial}, the image of $F^{A}_{n,d}$ lies in $\Pol_{d}(\gl_{n}(A))$.  Proposition~\ref{P:SWisFaithful} shows $F^{A}_{n,d}$ is faithful.  Therefore, all that remains is to show it is full and essentially surjective.

\subsection{Key Injectivity Result}\label{SS:Key-injectivity-result}  The following will be used repeatedly in the sequel.
\begin{proposition}\label{T:injective-theorem}
Let $n,d \geq 1$ with $n \geq d$.  Fix $\bi = (i_1,\dotsc , i_d) \in \tI(n,d)$.  Set $\bv^{\bone}_{\bi}=v^{1}_{i_1} \otimes \dotsb  \otimes v^{1}_{i_d}$.  Then the map $N \rightarrow V_{n}^{\otimes d}  \otimes_{\SdwreathA} N$ defined by $n \mapsto \bv^{\bone}_{\bi }  \otimes n$ is injective for any $\SdwreathA$-module $N$.
\end{proposition}

\begin{proof} Let $S$ be the $\k$-span of the set
\[
\left\{ v_{j_{1}}^{c_{1}}\otimes \dotsb \otimes v_{j_{d}}^{c_{d}} \mid \left\{j_{1}, \dotsc , j_{d} \right\} = \left\{i_{1}, \dotsc , i_{d} \right\}, c_{1}, \dotsc , c_{d} \in A \right\}, 
\] and let $S^{\perp}$ be the $\k$-span of the set
\[
\left\{ v_{j_{1}}^{c_{1}}\otimes \dotsb \otimes  v_{j_{d}}^{c_{d}} \mid \left\{j_{1}, \dotsc , j_{d} \right\} \neq \left\{i_{1}, \dotsc , i_{d} \right\}, c_{1}, \dotsc , c_{d} \in A \right\}.
\]
Then,
\[
V_{n}^{\otimes d} = S \oplus S^{\perp}
\] is a decomposition into right $\SdwreathA$-supermodules, and  
\begin{equation}\label{E:Decomp}
V_{n}^{\otimes d} \otimes_{\SdwreathA} N = \left( S \otimes_{\SdwreathA} N\right) \oplus \left(  S^{\perp} \otimes_{\SdwreathA} N\right)
\end{equation}
as superspaces.

Since the entries of $\bi \in \tI(n,d)$ are all distinct, $S$ is a free right $\SdwreathA$-module of rank one generated by $\bv^{\bone}_{\bi}$. As a consequence, $S \otimes_{\SdwreathA} N =\bv^{\bone}_{\bi }  \otimes N$ is isomorphic to $N$ as a superspace with isomorphism given by $n \mapsto \bv^{\bone}_{\bi } \otimes n$. Coupled with the decomposition given in \eqref{E:Decomp}, this shows the map in the claim is injective.
\end{proof}

\subsection{Fullness}\label{}

\begin{theorem}\label{T:FullFunctor} The functor $F_{n,d}^{A}$ is full whenever $n \geq d$.
\end{theorem}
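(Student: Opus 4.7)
The plan is to build the required preimage of $\varphi: F^{A}_{n,d}(N) \to F^{A}_{n,d}(N')$ in two layers: first recover a $\k S_d$-module map by restricting scalars and invoking classical Schur--Weyl, then upgrade this map to an $\SdwreathA$-module map by exploiting the full $\gl_n(A)$-action. Concretely, restrict $\varphi$ along $U(\gl_n(\k)) \hookrightarrow U(\gl_n(A))$. Via the identification \eqref{E:identifyII}, this restriction is a $U(\gl_n(\k))$-module map $F^{\k}_{n,d}(N) \to F^{\k}_{n,d}(N')$. Since $n \geq d$, Remark~\ref{R:KeyRemark} gives fullness of $F^{\k}_{n,d}$, so there is a (unique) $\k S_d$-supermodule homomorphism $f : N \to N'$ such that, after identifying via \eqref{E:identifyII},
\[
\varphi(\bv^{\bone}_{\bi} \otimes n) = \bv^{\bone}_{\bi} \otimes f(n) \qquad \text{for all } \bi \in I(n,d),\, n \in N.
\]

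The substantive step is to promote $f$ to an $\SdwreathA$-map. Since $\SdwreathA$ is generated by $\k S_d$ and $A^{\otimes d}$, it suffices to show $f$ intertwines the action of each elementary tensor $\bone_p(a) := 1 \otimes \cdots \otimes a \otimes \cdots \otimes 1 \in A^{\otimes d}$ (with homogeneous $a \in A$ in the $p$th slot). Fix any $\bi = (i_1,\dotsc,i_d) \in \tI(n,d)$; such a tuple exists because $n \geq d$. Because the entries of $\bi$ are pairwise distinct and each $v^{1}_{i_k}$ is even, the coproduct formula gives
\[
E^{a}_{i_p,i_p} \cdot \bv^{\bone}_{\bi} = v^{1}_{i_1} \otimes \dotsb \otimes v^{a}_{i_p} \otimes \dotsb \otimes v^{1}_{i_d} = \bv^{\bone}_{\bi} \cdot \bone_p(a),
\]
with no sign (the rule of signs in Section~\ref{SS:WreathProductSuperalgebras} yields $\varepsilon(\bone,\bone_p(a)) = 0$). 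Using the $\gl_n(A)$-linearity of $\varphi$ and the known formula on the left and right of the tensor, I compute
\[
\bv^{\bone}_{\bi} \otimes f\bigl(\bone_p(a).n\bigr) = \varphi\bigl(\bv^{\bone}_{\bi} \cdot \bone_p(a) \otimes n\bigr) = E^{a}_{i_p,i_p}.\varphi(\bv^{\bone}_{\bi} \otimes n) = \bv^{\bone}_{\bi} \otimes \bone_p(a).f(n).
\]
Theorem~\ref{T:injective-theorem} then says that the map $n' \mapsto \bv^{\bone}_{\bi} \otimes n'$ is injective, so $f(\bone_p(a).n) = \bone_p(a).f(n)$, as needed.

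With $f$ now an $\SdwreathA$-module homomorphism, it only remains to verify $F^{A}_{n,d}(f) = \varphi$. Both are $\k$-linear maps, and the elements $\bv^{\bone}_{\bi} \otimes n$ with $\bi \in I(n,d)$ span $V_n^{\otimes d} \otimes_{\SdwreathA} N$ (since $\{\bv^{\bone}_{\bi}\}_{\bi \in I(n,d)}$ is a right $A^{\otimes d}$-basis of $V_n^{\otimes d}$). They agree on this spanning set by the first paragraph, so $\varphi = F^{A}_{n,d}(f)$.

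The main potential pitfall is the upgrade from a $\k S_d$-map to an $\SdwreathA$-map; the trick is to notice that for tuples in $\tI(n,d)$ the diagonal Cartan-type action $E^{a}_{i_p,i_p}$ on $\bv^{\bone}_{\bi}$ exactly reproduces right multiplication by $\bone_p(a)$, which is precisely the feature that requires $n \geq d$ (via the existence of a tuple in $\tI(n,d)$) and that makes the injectivity of Theorem~\ref{T:injective-theorem} a powerful enough cancellation tool. Super-signs must be tracked carefully here, but they vanish because the reference vector $\bv^{\bone}_{\bi}$ is even.
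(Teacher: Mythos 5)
Your proposal is correct and follows essentially the same route as the paper: restrict to $U(\gl_{n}(\k))$, invoke classical Schur--Weyl fullness to produce a $\k S_{d}$-map $f$, then use the identity $E^{a}_{i_{p},i_{p}}\cdot\bv^{\bone}_{\bi}=\bv^{\bone}_{\bi}\cdot\bone_{p}(a)$ for $\bi\in\tI(n,d)$ together with the injectivity of Theorem~\ref{T:injective-theorem} to upgrade $f$ to an $\SdwreathA$-map. One small correction: when $\varphi$ and $a$ are both odd the sign does \emph{not} vanish --- pulling $E^{a}_{i_{p},i_{p}}$ past a homogeneous $\varphi$ produces $(-1)^{\parity{\varphi}\cdot\parity{a}}$, so the correct conclusion is $f(\bone_{p}(a).n)=(-1)^{\parity{f}\cdot\parity{a}}\bone_{p}(a).f(n)$, which is exactly the super-equivariance required of a homogeneous supermodule homomorphism of parity $\parity{f}$ under the paper's conventions.
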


\begin{proof} Let $N_{1}$ and $N_{2}$ be $\SdwreathA$-supermodules and let $g \in \Hom_{U(\gl_{n}(A))}\left( V_{n}^{\otimes d} \otimes_{\SdwreathA} N_{1}, V_{n}^{\otimes d} \otimes_{\SdwreathA} N_{2}\right)$ be a homogeneous map.  Applying the restriction functor and the identification given by \eqref{E:identifyII}, we may view $g$ as a morphism in $\Hom_{U(\gl_{n}(\k))}\left( E_{n}^{\otimes d} \otimes_{\k S_{d}} N_{1}, E_{n}^{\otimes d} \otimes_{\k S_{d}} N_{2}\right)$.  Since $F_{n,d}^{\k}$ is a full functor, we may select a homogeneous $\k S_{d}$-supermodule homomorphism $f: N_{1} \to N_{2}$ of the same parity as $g$ such that 
\[
\Id_{E_{n}^{\otimes d}} \otimes f = g.
\]  That is, $\Id_{V_{n}^{\otimes d}} \otimes f = g$ on the subspace of $V_{n}^{\otimes d} \otimes_{\SdwreathA} N$ spanned by $\left\{v_{\bi}^{\bone} \otimes n \mid \bi \in I(n,d), n \in N \right\}$.  We claim that $f$ is actually a $\SdwreathA$-supermodule homomorphism.  That is, $f$ is an allowable input into the functor $F_{n,d}^{A}$. This, along with the identification given in \eqref{E:identifyII}, demonstrates that $F_{n,d}^{A}(f) = g$, as desired.

For short, given $k \in [1, d]$ and homogeneous $c \in A$, write $c_{k}$ for $1^{\otimes (k-1)} \otimes c \otimes 1^{\otimes (d-k-1)}  \in A^{\otimes d}$.  Since these elements along with $\k S_{d}$ generate $\SdwreathA$, and since $f$ is already known to be $\k S_{d}$-equivariant, it will suffice to show for all homogeneous $n \in N_{1}$ and all homogeneous $c \in A$  that 
\[
f \left( c_{k}n \right) = (-1)^{\parity{f}\cdot \parity{c_{k}}} c_{k}  f(n).
\] 

Since $n \geq d$, we may set $\bv = v^{1}_{1} \otimes \dotsb \otimes v^{1}_{d}$.  Consider
\begin{align*}
\bv \otimes f \left( c_{k}n \right) &= \left(\Id_{V_{n}^{\otimes d}} \otimes f \right) \left(\bv \otimes c_{k}n \right) \\
&= g\left(\bv \otimes c_{k}n \right) \\
&= g\left(\bv c_{k} \otimes n \right) \\
&= g\left( (v_{1}^1 \otimes \dotsb \otimes v_{k-1}^{1}\otimes  v_{k}^{c} \otimes v_{k+1}^{1} \otimes \dotsb \otimes v_{d}^{1}) \otimes  n \right) \\
&= g \left(\left( E_{k,k}^{c}( v^{1}_{1} \otimes \dotsb \otimes v_{k-1}^{1}\otimes  v_{k}^{1} \otimes v_{k+1}^{1} \otimes \dotsb \otimes v_{d}^{1}) \right) \otimes  n\right)\\
&= (-1)^{\parity{g}\cdot \parity{E_{k,k}^{c}}} E_{k,k}^{c} g \left( (v_{1}^1 \otimes \dotsb \otimes v_{k-1}^{1}\otimes  v_{k}^{1} \otimes v_{k+1}^{1} \otimes \dotsb \otimes v_{d}^{1}) \otimes  n\right) \\
&=(-1)^{\parity{g}\cdot \parity{E_{k,k}^{c}}} E_{k,k}^{c} \left( \Id_{V_{n}^{\otimes d}} \otimes f\right)  \left((v_{1}^1 \otimes \dotsb \otimes v_{k-1}^{1}\otimes  v_{k}^{1} \otimes v_{k+1}^{1} \otimes \dotsb \otimes v_{d}^{1}) \otimes  n\right)\\
&=(-1)^{\parity{g}\cdot \parity{E_{k,k}^{c}}} E_{k,k}^{c}\left( (v_{1}^1 \otimes \dotsb \otimes v_{k-1}^{1}\otimes  v_{k}^{1} \otimes v_{k+1}^{1} \otimes \dotsb \otimes v_{d}^{1}) \otimes  f(n)\right)\\
&=(-1)^{\parity{g}\cdot \parity{E_{k,k}^{c}}} (v_{1}^1 \otimes \dotsb \otimes v_{k-1}^{1}\otimes  v_{k}^{c} \otimes v_{k+1}^{1} \otimes \dotsb \otimes v_{d}^{1}) \otimes  f(n)\\
&=(-1)^{\parity{g}\parity{E_{k,k}^{c}}} (v_{1}^1 \otimes \dotsb \otimes v_{k-1}^{1}\otimes  v_{k}^{1} \otimes v_{k+1}^{1} \otimes \dotsb \otimes v_{d}^{1})c_{k} \otimes  f(n) \\
&=(-1)^{\parity{g}\parity{E_{k,k}^{c}}} (v_{1}^1 \otimes \dotsb \otimes v_{k-1}^{1}\otimes  v_{k}^{1} \otimes v_{k+1}^{1} \otimes \dotsb \otimes v_{d}^{1}) \otimes  c_{k}f(n)\\
&= (-1)^{\parity{g}\parity{E_{k,k}^{c}}} \bv \otimes  c_{k}f(n).
\end{align*}  By Proposition~\ref{T:injective-theorem}, 
\[
f \left( c_{k}n \right) = (-1)^{\parity{g} \cdot \parity{E_{k,k}^{c}}}c_{k} f(n).
\]  Finally, note that $\parity{f}=\parity{g}$ and $\parity{E_{k,k}^{c}} = \parity{c}_{k}$, and thus the desired equality holds.
\end{proof}

\subsection{Defining a $\SdwreathA$-module structure}\label{SS:lifting-the-wreath-product-structure}  

Let $n \geq d+1$.  Assume $N$ is a $\k S_{d}$-supermodule for which $E_{n}^{\otimes d} \otimes_{\k S_{d}}N$ has a $U(\gl_{n}(A))$-supermodule structure that makes it an object of $\Pol_{d}(\gl_{n}(A))$ and, furthermore, that action restricts to the standard action of $U(\gl_{n}(\k ))$ on $E_{n}^{\otimes d} \otimes_{\k S_{d}}N$.  Throughout this section,  we assume we are in this situation.

The goal of this section is to show that with this setup it is possible to extend the $\k S_{d}$-action on $N$ to an action by $\SdwreathA$ on $N$.  The general strategy is to use the assumed action of $\gl_{n}(A)$ on $E_{n}^{\otimes d} \otimes_{\k S_{d}}N$ to define the action of $A^{\otimes d}$ on $N$.  Very roughly, given $a \in A$ and $n \in N$, we would like to define the action of $a_{k}:=1^{\otimes (k-1)} \otimes a \otimes 1^{ \otimes d-k}$ on $n$ using the action of $E_{j_{k}, i_{k}}^{a}$ on $v_{\bi} \otimes n$ for $(\bj, \bi)_{k} \in \tilde{I}^{2}(n,d)$ in combination with Proposition~\ref{T:injective-theorem}.  Let us explain how to implement this strategy.

For $\bi \in I(n,d)$, write $\bv_{\bi} = v_{i_{1}}\otimes\dotsb \otimes v_{i_{d}} \in E_{n}^{\otimes d}$.  Given $ (\bj , \bi )_{k} \in \Itwo$ and a homogeneous $a \in A$, define a $\k$-linear map, 
\[
\alpha_{(\bj, \bi)_{k}}^{a}: N \to N,
\] using the action of $E_{j_{k}, i_{k}}^{a}$ on $E_{n}^{\otimes d} \otimes_{\k S_{d}} N$, as follows.   By weight considerations, for any homogeneous $n \in N$ we have
\[
E_{j_{k},i_{k}}^{a}(\bv_{\bi} \otimes n)  \in \left(E_{n}^{\otimes d} \otimes_{\k S_{d}} N \right)_{\bepsilon_{\bj}} = \left(E_{n}^{\otimes d}\right)_{\bepsilon_{\bj}} \otimes_{\k S_{d}} N.
\] However, because $\bj$ has distinct entries, $\left(E_{n}^{\otimes d}\right)_{\bepsilon_{\bj}}$  is a free right $\k S_{d}$-supermodule of rank one with basis vector $\bv_{\bj}$.  Therefore, 
\[
E_{j_{k},i_{k}}^{a}(\bv_{\bi} \otimes n) = \sum_{\sigma \in S_{d}} \left(  \bv_{\bj}\sigma \otimes n_{\sigma}\right) =  \bv_{\bj} \otimes \left(\sum_{\sigma \in S_{d}} \sigma  n_{\sigma}\right) = \bv_{\bj} \otimes  \alpha_{(\bj , \bi)_{k}}^{a} (n),
\]  for some $\alpha_{(\bj , \bi)_{k}}^{a} (n) \in N$.  Since $\bj$ has distinct entries, Proposition~\ref{T:injective-theorem} implies $\alpha_{(\bj , \bi)_{k}}^{a}(n)$ is unique and, hence, the rule $n \mapsto \alpha_{(\bj , \bi)_{k}}^{a}(n)$ defines a function, as claimed. The fact that it is a $\k$-linear map of parity $\parity{a}$ is straightforward to check.

The following result records some of the basic properties of the maps $\alpha^{a}_{(\bj , \bi)_{k}}:N \to N$.  Note that here and going forward, we frequently make use of the observation that if $M$ is a supermodule for $U(\gl_{n}(A))$ which lies in $\Pol_{d}(\gl_{n}(A))$, and if $m \in M_{\lambda}$ and $\lambda (h_{j}) = 0$, then $E_{i,j}^{a}m=0$: either because $i=j$ and the second condition of Definition~\ref{D:polynomial}  applies, or because $i \neq j$ and because $\lambda +\varepsilon_{i} - \varepsilon_{j}$ is not in $\Lambda(n,d)$ and, hence, is not an allowable weight of $M$.


\begin{lemma}  The following statements are true:
\begin{enumerate}
\item If $(\bj, \bi)_{k}, (\bj', \bi)_{k} \in \Itwo$, then $\alpha^{a}_{(\bj , \bi)_{k}}=\alpha^{a}_{(\bj' , \bi)_{k}}$ for all homogeneous $a \in A$.
\item If $(\bj, \bi)_{k}, (\bj, \bi' )_{k} \in \Itwo$, then $\alpha^{a}_{(\bj , \bi)_{k}}=\alpha^{a}_{(\bj , \bi')_{k}}$ for all homogeneous $a \in A$.
\item If $(\bj, \bi)_{k} \in \Itwo$, then for any $p \in [1,n]\setminus \{i_{1}, \dotsc , i_{d}, j_{1}, \dotsc, j_{d} \}$ and $\ell \in [1,d]$ with $\ell \neq k$, if we define $\bi '=(i_{1}, \dotsc , i_{\ell-1}, p, i_{\ell +1}, \dotsc , i_{d})$ and $\bj'=(j_{1}, \dotsc , j_{\ell-1}, p, j_{\ell +1}, \dotsc , j_{d}) $, then $(\bj', \bi' )_{k} \in \Itwo$ and $\alpha^{a}_{(\bj , \bi)_{k}}=\alpha^{a}_{(\bj' , \bi')_{k}}$ for all homogeneous $a \in A$.
\end{enumerate}
\end{lemma}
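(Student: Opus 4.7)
The strategy for all three parts is the same. Apply a suitable element $E^{1}_{r,s} \in U(\gl_{n}(\k))$ to the defining relation
\[
E^{a}_{j_{k},i_{k}}(\bv_{\bi}\otimes n) = \bv_{\bj}\otimes \alpha^{a}_{(\bj,\bi)_{k}}(n),
\]
use the supercommutator formula of Section~\ref{SS:LieSuperalgebras} to move $E^{1}_{r,s}$ past $E^{a}_{j_{k},i_{k}}$ on the left-hand side, and compare with the direct action of $E^{1}_{r,s}$ on $\bv_{\bj}$ on the right-hand side. Theorem~\ref{T:injective-theorem} will then force the two $\alpha$ maps to coincide. The essential tool for eliminating stray terms is the consequence of Definition~\ref{D:polynomial}(2) noted just above the lemma: any $E^{a}_{r,s}$ annihilates a weight vector whose weight has zero component at $h_{s}$.

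For (1), we may assume $j_{k}\neq j'_{k}$. The relevant commutator is
\[
[E^{1}_{j'_{k},j_{k}},E^{a}_{j_{k},i_{k}}]=E^{a}_{j'_{k},i_{k}}-\delta_{j'_{k},i_{k}}E^{a}_{j_{k},j_{k}}.
\]
A brief case split on whether $j_{k}=i_{k}$ and whether $j'_{k}=i_{k}$ shows that, in every case, applying $E^{1}_{j'_{k},j_{k}}$ to both sides of the defining relation produces $\bv_{\bj'}\otimes \alpha^{a}_{(\bj,\bi)_{k}}(n)$ on the right and $\bv_{\bj'}\otimes \alpha^{a}_{(\bj',\bi)_{k}}(n)$ on the left after extraneous terms are killed: terms of the form $E^{a}_{j_{k},j_{k}}(\bv_{\bi}\otimes n)$ or $E^{a}_{j_{k},i_{k}}(\bv_{\bj'}\otimes n)$ all involve $E^{a}_{r,s}$ applied to a vector whose weight at $h_{s}$ is zero. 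Since $\bj'\in \tI(n,d)$, Theorem~\ref{T:injective-theorem} gives the conclusion.

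Part (2) is entirely parallel: apply $E^{1}_{i_{k},i'_{k}}$ to $\bv_{\bi'}\otimes n$ to produce $\bv_{\bi}\otimes n$, then use
\[
[E^{a}_{j_{k},i_{k}},E^{1}_{i_{k},i'_{k}}]=E^{a}_{j_{k},i'_{k}}-\delta_{i'_{k},j_{k}}E^{a}_{i_{k},i_{k}}
\]
combined with the same weight-vanishing principle. Part (3) is the easiest: the hypotheses $\ell\neq k$ and $p\notin \{i_{1},\ldots,i_{d},j_{k}\}$ make $[E^{a}_{j_{k},i_{k}},E^{1}_{p,i_{\ell}}]=0$ (both Kronecker deltas in the commutator vanish, using distinctness of $\bj$ for the second), so $E^{1}_{p,i_{\ell}}$ commutes freely past $E^{a}_{j_{k},i_{k}}$. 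Evaluating on $\bv_{\bi}\otimes n$ and using $j_{\ell}=i_{\ell}$ yields $\bv_{\bj'}\otimes \alpha^{a}_{(\bj,\bi)_{k}}(n)=\bv_{\bj'}\otimes \alpha^{a}_{(\bj',\bi')_{k}}(n)$, and Theorem~\ref{T:injective-theorem} concludes.

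The only place a real obstacle could arise is the case analysis in (1), when $j'_{k}=i_{k}$ or $j_{k}=i_{k}$: the commutator acquires an $E^{a}_{j_{k},j_{k}}$ term, and one may also get a stray $E^{a}_{j_{k},i_{k}}$ acting on $\bv_{\bj'}$ from the first summand on the right of the commutator rewrite. Each of these is exactly what the second condition of Definition~\ref{D:polynomial} is tailored to handle, so no genuinely new input is required.
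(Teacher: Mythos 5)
Your proposal is correct and follows essentially the same route as the paper: apply a suitable $E^{1}_{r,s}$, commute it past $E^{a}_{j_k,i_k}$, kill the stray terms using the weight-vanishing observation (including Definition~\ref{D:polynomial}(2) for the diagonal terms), and conclude with Theorem~\ref{T:injective-theorem}. The only organizational difference is that for (1) and (2) the paper first reduces to comparisons with the diagonal pairs $(\bi,\bi)_k$ and $(\bj,\bj)_k$, whereas you compare the two pairs directly via a case split; the underlying computations are the same.
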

\begin{proof}
To prove (1), we first consider the special case of the pair $(\bj, \bi)_{k}$ and $(\bi, \bi)_{k}$.  The claim in (1) is trivial in this case if $\bj = \bi$, so we assume $\bj  \neq \bi$ and, hence, $j_{k} \neq i_{k}$. Use the  supercommutator formula \eqref{E:supercommutator}  to see that
\[
E_{i_{k}, j_{k}}^{1}E_{j_{k}, i_{k}}^{a} = E_{j_{k},i_{k}}^{a}E_{i_{k},j_{k}}^{1} + E_{i_{k},i_{k}}^{a} - E^{a}_{j_{k}, j_{k}}.
\]
Letting both sides act on on $\bv^{}_{\bi} \otimes n$ yields 
\[
E_{i_{k}, j_{k}}^{1}E_{j_{k}, i_{k}}^{a}(\bv^{}_{\bi} \otimes n) = E_{i_{k}, j_{k}}^{1}(\bv^{}_{\bj} \otimes \alpha^{a}_{(\bj , \bi)_{k}}(n))= \bv^{}_{\bi} \otimes \alpha^{a}_{(\bj , \bi)_{k}}(n)
\] and 
\begin{align*}
\left(E_{j_{k},i_{k}}^{a}E_{i_{k},j_{k}}^{1} + E_{i_{k},i_{k}}^{a} - E^{a}_{j_{k}, j_{k}} \right)(\bv^{}_{\bi} \otimes n) &= E_{j_{k},i_{k}}^{a}E_{i_{k},j_{k}}^{1}(\bv^{}_{\bi} \otimes n) + E_{i_{k},i_{k}}^{a}(\bv^{}_{\bi} \otimes n) - E^{a}_{j_{k}, j_{k}}(\bv^{}_{\bi} \otimes n) \\
&= E_{j_{k},i_{k}}^{a}E_{i_{k},j_{k}}^{1}(\bv^{}_{\bi} \otimes n) + \bv^{}_{\bi} \otimes \alpha^{a}_{(\bi, \bi)_{k}}(n) - E^{a}_{j_{k}, j_{k}}(\bv^{}_{\bi} \otimes n).
\end{align*}
However, $\bepsilon_{\bi} + \varepsilon_{i_{k}}-\varepsilon_{j_{k}} \not \in \Lambda(n,d)$ and so the first term of the final sum equals zero; likewise, the observation that $h_{j_{k}}\bv_{\bi}=0$ along with the second property of supermodules in $\Pol_{d}(\gl_{n}(A))$ implies the last term is also equal to zero.  Therefore, $\bv^{}_{\bi} \otimes \alpha^{a}_{(\bj , \bi)_{k}}(n) = \bv^{}_{\bi} \otimes \alpha^{a}_{(\bi, \bi)_{k}}(n)$.  It follows from Proposition~\ref{T:injective-theorem} that $\alpha^{a}_{(\bj , \bi)_{k}}(n) = \alpha^{a}_{(\bi, \bi)_{k}}(n)$, as desired.

Applying the special case to the pair $(\bj , \bi )_{k}$ and $(\bi,\bi)_{k}$, and the pair $(\bj' , \bi )_{k}$ and $(\bi,\bi)_{k}$, yields $\alpha^{a}_{(\bj , \bi)_{k}} = \alpha^{a}_{(\bi,\bi)_{k}}$ and $ \alpha^{a}_{(\bj', \bi)}= \alpha^{a}_{(\bi,\bi)_{k}}$, respectively.  Combining the two gives  $\alpha^{a}_{(\bj , \bi)_{k}}  = \alpha^{a}_{(\bj', \bi)}$, as desired.

Likewise, to prove (2), it suffices to consider the case of $(\bj, \bi)_{k}$ and $(\bj, \bj )_{k}$ where $\bj \neq \bi$.  In particular, $j_{k} \neq i_{k}$.  Use the supercommutator formula \eqref{E:supercommutator} to see that 
\[
E^{a}_{j_{k}, j_{k}}E^{1}_{j_{k}, i_{k}} = E^{1}_{j_{k}, i_{k}}E^{a}_{j_{k}, j_{k}} + E^{a}_{j_{k}, i_{k}}
\]  Applying each side to $ \bv_{\bi}^{} \otimes n$ yields
\[
E^{a}_{j_{k}, j_{k}}E^{1}_{j_{k}, i_{k}}( \bv_{\bi}^{} \otimes n) = E^{a}_{j_{k}, j_{k}}\left( \bv_{\bj}^{} \otimes n \right) =  \bv_{\bj}^{} \otimes \alpha_{(\bj , \bj)_{k}}^{a} (n)
\]
and 
\begin{align*}
\left( E^{1}_{j_{k}, i_{k}}E^{a}_{j_{k}, j_{k}} + E^{a}_{j_{k}, i_{k}}\right)( \bv_{\bi} \otimes n) &= \left( E^{1}_{j_{k}, i_{k}}E^{a}_{j_{k}, j_{k}}\right)( \bv_{\bi} \otimes n) +  \bv_{\bj} \otimes  \alpha_{(\bj, \bi)_{k}}^{a}(n).
\end{align*}  However, the first term in the last sum is again zero by the second property of supermodules in $\Pol_{d}(\gl_{n}(A))$.  Proposition~\ref{T:injective-theorem} then implies $\alpha_{(\bj , \bj)_{k}}^{a} (n) = \alpha_{(\bj, \bi)_{k}}^{a}(n)$, as claimed.

To prove (3), we first note that, by the choice of $p$, the entries of $\bi'$ and $\bj'$ are pairwise distinct and the two have the same values in every position except possibly at $i'_{k}$ and $j'_{k}$.  Therefore $(\bi', \bj')_{k} \in \Itwo$, as claimed.  Noting that $j'_{k}=j_{k}$ and $i'_{k}=i_{k}$, and, hence, that $i'_{k}\neq p$ and $j'_{k}\neq i_{\ell}$, the supercommutator formula \eqref{E:supercommutator} implies that 
\[
E^{a}_{j'_{k},i'_{k}}E_{p,i_{\ell}}^{1} =E_{p,i_{\ell}}^{1}E^{a}_{j'_{k},i'_{k}}.
\] Applying each side to $ \bv_{\bi}^{} \otimes n$ yields
\[
\left(E^{a}_{j'_{k},i'_{k}}E_{p,i_{\ell}}^{1} \right)( \bv_{\bi}^{} \otimes n) = E^{a}_{j'_{k},i'_{k}}( \bv_{\bi'}^{} \otimes n)= \bv_{\bj'}^{} \otimes \alpha_{(\bj', \bi')_{k}}^{a}(n) 
\] and 
\[
E_{p,i_{\ell}}^{1}E^{a}_{j'_{k},i'_{k}}( \bv_{\bi}^{} \otimes n) = E_{p,i_{\ell}}^{1}E^{a}_{j_{k},i_{k}}( \bv_{\bi}^{} \otimes n)=E_{p,j_{\ell}}^{1}\left( \bv_{\bj}^{} \otimes \alpha_{\bj, \bi}^{a} (n) \right) =  \bv_{\bj'}^{} \otimes \alpha_{\bj, \bi}^{a} (n).
\]  Proposition~\ref{T:injective-theorem} implies $\alpha_{(\bj' , \bi')_{k}}^{a} (n) = \alpha_{(\bj, \bi)_{k}}^{a}(n)$, as claimed.
\end{proof}

\begin{proposition} \label{L:independence-of-alpha} Let $n \geq d+1$.  For $(\bj, \bi)_{k}, (\bj', \bi')_{k} \in \Itwo$ and homogeneous $a \in A$, 
\[
\alpha_{(\bj', \bi')_{k}}^{a} = \alpha_{(\bj, \bi)_{k}}^{a}.
\]  That is, the map $\alpha_{(\bj, \bi)_{k}}^{a}$ may depend on $k$ and $a$, but is independent of the choice of $(\bj, \bi)_{k} \in \Itwo$.
\end{proposition}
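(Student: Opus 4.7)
The plan is to combine parts (1), (2), and (3) of the preceding lemma to connect any two elements of $\Itwo$ by a sequence of elementary moves, each of which preserves the map $\alpha^a_{(\bj, \bi)_k}$.

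First, by part (1), $\alpha^a_{(\bj, \bi)_k} = \alpha^a_{(\bi, \bi)_k}$ (and similarly for the primed version), so the problem reduces to showing $\alpha^a_{(\bi, \bi)_k} = \alpha^a_{(\bi', \bi')_k}$ for arbitrary $\bi, \bi' \in \tI(n, d)$. The relevant elementary moves, each preserving $\alpha$, are the following: changing the entry at position $k$ to any value keeping the tuple's entries pairwise distinct, obtained by combining parts (1) and (2); and changing the entry at a position $\ell \neq k$ to any value not currently among $i_1, \dotsc, i_d$, obtained directly from part (3) applied with $\bj=\bi$. The second move requires a fresh value in $[1,n] \setminus \{i_1, \dotsc, i_d\}$, and hence the hypothesis $n \geq d+1$ (which is in force throughout this subsection).

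Next, I would verify that these moves suffice to transform $\bi$ into $\bi'$ by an inductive bookkeeping argument. Fix an ordering $\ell_1, \dotsc, \ell_{d-1}$ of $[1,d] \setminus \{k\}$ and process these positions in turn, deferring position $k$ to the end. At step $i$, assume positions $\ell_1, \dotsc, \ell_{i-1}$ already hold the target values $i'_{\ell_1}, \dotsc, i'_{\ell_{i-1}}$, and we wish to set the entry at $\ell_i$ to $i'_{\ell_i}$. If $i'_{\ell_i}$ is currently absent from the tuple, apply the position-$\ell_i$ move directly. Otherwise $i'_{\ell_i}$ sits at some position $m$, and since $\bi'$ has pairwise distinct entries, $m$ cannot be any of the already-processed positions. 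Hence $m$ is either $k$ or some unprocessed $\ell_j$ with $j > i$, and in either case one first applies a move to overwrite the entry at position $m$ with a fresh value (using $n \geq d+1$), then applies the position-$\ell_i$ move to set $i_{\ell_i} = i'_{\ell_i}$. After all such steps, apply the position-$k$ move to set $i_k = i'_k$.

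The main obstacle is really the bookkeeping: ensuring that a fresh value always exists at every step and that overwriting the entry at position $m$ never disturbs an already-corrected position. The former is precisely where $n \geq d+1$ enters (and is the sharp bound for this argument), while the latter follows from the careful processing order. Once these checks are in place, the desired equality $\alpha^a_{(\bj, \bi)_k} = \alpha^a_{(\bj', \bi')_k}$ follows immediately by concatenating the initial reduction with the elementary move chain.
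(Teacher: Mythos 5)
Your argument is correct and follows essentially the same route as the paper: reduce to diagonal pairs $(\bi,\bi)_{k}$ via part (1), then connect any two such pairs by a chain of the elementary moves coming from (1)+(2) (position $k$) and from (3) (positions $\ell \neq k$), using $n \geq d+1$ to park a conflicting value at a fresh index before installing the target value. The paper organizes the chain as an induction on the number of entries differing from the canonical tuple $(1,2,\dotsc,d)$ rather than your direct position-by-position sweep from $\bi$ to $\bi'$, but this is only a difference in bookkeeping, not in substance.
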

\begin{proof} 

For a fixed homogeneous $a \in A$, define an equivalence relation on $\Itwo$ by declaring $(\bj, \bi)_{k} \sim (\bj', \bi')_{\ell}$ if and only if $k=\ell$ and $\alpha_{(\bj', \bi')_{k}}^{a} = \alpha_{(\bj, \bi)_{k}}^{a}$. It suffices to show that $(\bj, \bi)_{k} \in \Itwo$ is equivalent to $((1,2,\dotsc  ,d),(1,2,\dotsc,d))_{k}$.  By applying (1) from the previous lemma if necessary, we may assume without loss of generality that $(\bj , \bi )_{k} = (\bi, \bi)_{k}$.

To prove $(\bi, \bi)_{k}$ is equivalent to $((1,2,\dotsc  ,d),(1,2,\dotsc,d))_{k}$, let 
\[
D \left((\bi, \bi)_{k}\right) := \# \left\{t \in [1,d] \mid i_{t} \neq t \right\} 
\]  be the function which counts the number of entries of $(\bi, \bi)_{k}$ that are different than the corresponding entries of $((1,2,\dotsc ,d),(1,2,\dotsc,d))_{k}$.  We argue by induction on $D \left((\bi, \bi)_{k}\right)$, with the base case of $D \left((\bi, \bi)_{k}\right)=0$ being trivial.

Assume $D \left((\bi, \bi)_{k}\right) > 0$.  Then $i_{\ell}\neq \ell$ for some $\ell \in [1,d]$.  We first consider the case when $\ell$ does not appear as an entry of $(\bi, \bi)_{k}$.  If $\ell \neq k$, then we may use (3) from the previous lemma to simultaneously change $i_{\ell}$ of both tuples to $\ell$ and thereby obtain a pair $(\bi', \bi')_{k} \in  \Itwo$ which is equivalent to $(\bi, \bi)_{k}$ and with $D((\bi', \bi')_{k}) = D((\bi, \bi)_{k})-1$.  If $\ell = k$, let $\bi'= (i_{1}, \dotsc , i_{k-1}, \ell, i_{k+1}, \dotsc , i_{d})$.  Using (1) shows that $\alpha^{a}_{(\bi, \bi)_{k}}=\alpha^{a}_{(\bi', \bi)_{k}}$  and (2) shows $\alpha^{a}_{(\bi', \bi)_{k}}=\alpha^{a}_{(\bi', \bi')_{k}}$.  Once again, we have obtained a pair $(\bi', \bi')_{k} \in  \Itwo$ which is equivalent to $(\bi, \bi)_{k}$ and with $D((\bi', \bi')_{k}) = D((\bi, \bi)_{k})-1$.

On the other hand, if $\ell$ appears as an entry of $(\bi, \bi)_{k}$, then first choose a $q \in [1,n]$ which does not appear as an entry of $(\bi, \bi)_{k}$.  Such a $q$ exists because  $(\bi, \bi)_{k}$ has $d$ distinct entries and, by assumption, $n \geq  d+1$.  Arguing as in the previous paragraph, we may replace $\ell$ with $q$ wherever it appears in the pair $(\bi, \bi)_{k}$.  In this way we obtain a pair $(\bi'', \bi'')_{k} \in  \Itwo$ that is equivalent to $(\bi, \bi)_{k}$, satisfies $D((\bi'' , \bi'')_{k}) \leq D((\bi , \bi )_{k})$, and where $\ell$ does not appear as an entry of $(\bi'' , \bi'')_{k}$. We are now in the situation of the previous paragraph.  That argument yields a pair $(\bi ' , \bi ')_{k} \in \Itwo$ that is equivalent to $(\bi'', \bi'')_{k}$ and, hence, is equivalent to $(\bi , \bi)_{k}$.  Furthermore, $D((\bi', \bi')_{k}) = D((\bi'', \bi'')_{k})-1 < D((\bi, \bi)_{k})$.

By induction, $(\bi, \bi)_{k}$ is equivalent to $((1,2,\dotsc  ,d),(1,2,\dotsc,d))_{k}$ and this proves the claim.
\end{proof}

In light of the previous result, we sometimes write $\alpha^{a}_{k}$ for $\alpha^{a}_{(\bj , \bi)_{k}}$, where the $(\bj , \bi )_{k} \in \Itwo$ is left implicit.

We continue to assume that $N$ is a $\k S_{d}$-supermodule such that $E_{n}^{\otimes d} \otimes_{\k S_{d}} N$ has a $U(\gl_{n}(A))$-supermodule structure that makes it a supermodule in $\Pol_{d}(\gl_{n}(A))$, and such that the  action of the subalgebra $U(\gl_{n}(\k))$ on $E_{n}^{\otimes d} \otimes_{\k S_{d}} N$ is the obvious one.

\begin{proposition} \label{L:alpha-is-an-algebra-map} Let $n \geq d+1$.   The map 
\[
\alpha : A^{\otimes d} \to \End_{\k}(N)
\] defined by 
\[
\alpha(c_{1}\otimes \dotsb \otimes c_{d}) = \alpha_{1}^{c_{1}} \circ \dotsb \circ \alpha_{d}^{c_{d}}
\] is a superalgebra homomorphism.  Moreover, the action on $N$ by $A^{\otimes d}$ afforded by this map along with the given action of $\k S_{d}$ on $N$ defines a $\SdwreathA$-module structure on $N$.
\end{proposition}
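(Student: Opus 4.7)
The plan is to reduce both claims to three key relations among the slot operators $\alpha_k^c \in \End_\k(N)$: (i) $\alpha_k^1 = \Id_N$; (ii) cross-slot graded commutation $\alpha_k^c \alpha_\ell^{c'} = (-1)^{\parity{c}\parity{c'}} \alpha_\ell^{c'} \alpha_k^c$ for $k \neq \ell$; and (iii) same-slot composition $\alpha_k^c \alpha_k^{c'} = \alpha_k^{cc'}$. Throughout, fix $\bi = (1, 2, \dotsc, d) \in \tI(n,d)$ and set $\bv = \bv_\bi$. The workhorse is the combination of the defining formula for $\alpha$ with Proposition~\ref{L:independence-of-alpha}: for any $(\bj, \bi')_k \in \Itwo$ and $m \in N$,
\[
E_{j_k, i'_k}^c(\bv_{\bi'} \otimes m) = \bv_\bj \otimes \alpha_k^c(m)
\]
inside $E_n^{\otimes d} \otimes_{\k S_d} N$.

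Relation (i) is immediate since $E_{k, k}^1 = h_k$ acts via the standard $U(\gl_n(\k))$-action by the scalar $\bepsilon_\bi(h_k) = 1$. Relation (ii) follows from the vanishing of $[E_{k, k}^c, E_{\ell, \ell}^{c'}]$ for $k \neq \ell$: applying $E_{k, k}^c E_{\ell, \ell}^{c'} = (-1)^{\parity{c}\parity{c'}} E_{\ell, \ell}^{c'} E_{k, k}^c$ to $\bv \otimes n$, unfolding via the workhorse identity, and appealing to Theorem~\ref{T:injective-theorem} yields the claim. Relation (iii) is where the hypothesis $n \geq d+1$ is essential: pick $k' \in [1, n] \setminus \{1, \dotsc, d\}$ and use the supercommutator identity
\[
E_{k, k'}^c E_{k', k}^{c'} = (-1)^{\parity{c}\parity{c'}} E_{k', k}^{c'} E_{k, k'}^c + E_{k, k}^{cc'} - (-1)^{\parity{c}\parity{c'}} E_{k', k'}^{c'c}.
\]
Evaluate on $\bv \otimes n$: the first summand on the right vanishes because $E_{k, k'}^c$ sends weight $\bepsilon_\bi$ to a weight with $-1$ in the $k'$-th coordinate, which is not in $\Lambda^+(n, d)$; the third summand vanishes by the second condition of $\Pol_d$, since $\bepsilon_\bi(h_{k'}) = 0$; two applications of the workhorse identity (with the intermediate tuple obtained from $\bi$ by replacing its $k$th entry by $k'$, which still lies in $\tI(n,d)$) collapse the left side to $\bv \otimes \alpha_k^c \alpha_k^{c'}(n)$; and the middle summand is $\bv \otimes \alpha_k^{cc'}(n)$. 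Injectivity then finishes the argument.

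With (i)--(iii) in hand, both remaining assertions reduce to routine sign bookkeeping. For the superalgebra homomorphism, given $\ba = c_1 \otimes \dotsb \otimes c_d$ and $\ba' = c_1' \otimes \dotsb \otimes c_d'$, rearrange $\alpha(\ba) \alpha(\ba') = \alpha_1^{c_1} \dotsb \alpha_d^{c_d} \alpha_1^{c'_1} \dotsb \alpha_d^{c'_d}$ by sliding each $\alpha_p^{c'_p}$ leftward past each $\alpha_q^{c_q}$ with $q > p$ using (ii), then collapsing adjacent same-slot pairs via (iii); the accumulated sign $(-1)^{\sum_{p < q} \parity{c'_p}\parity{c_q}}$ matches the Koszul sign in the product $\ba \cdot \ba'$ in $A^{\otimes d}$, while (i) handles the unit. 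For the wreath product structure, since both $\ba \mapsto \sigma \alpha(\ba) \sigma^{-1}$ and $\ba \mapsto \alpha(\ba \sigma^{-1})$ are superalgebra maps, the conjugation relation $\sigma \alpha(\ba) \sigma^{-1} = \alpha(\ba \sigma^{-1})$ need only be verified on single-slot generators $c_k$ with $\sigma$ an adjacent transposition. This reduces to the three identities $(i, i+1) \alpha_k^c = \alpha_k^c (i, i+1)$ for $k \notin \{i, i+1\}$, $(i, i+1) \alpha_i^c = \alpha_{i+1}^c (i, i+1)$, and its mirror with $i$ and $i+1$ swapped. Each follows by applying the appropriate diagonal $E_{j, j}^c$ to the equality $\bv \otimes (i, i+1)n = \bv \cdot (i, i+1) \otimes n = \bv_{\bi'} \otimes n$ in $E_n^{\otimes d} \otimes_{\k S_d} N$ (where $\bi'$ swaps entries $i$ and $i+1$ of $\bi$), expanding both sides via the workhorse identity, and invoking Theorem~\ref{T:injective-theorem}. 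The single genuine obstacle throughout is relation (iii), which is the only place the bound $n \geq d+1$ is used.
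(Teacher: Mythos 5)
Your proposal is correct, and its overall architecture matches the paper's: reduce everything to relations among the slot operators $\alpha_{k}^{a}$, prove each relation by evaluating a supercommutator identity on a distinguished basis vector $\bv_{\bi}\otimes n$ with $\bi\in\tI(n,d)$, and conclude with Theorem~\ref{T:injective-theorem}. The one place you genuinely diverge is the same-slot relation $\alpha_{k}^{c}\circ\alpha_{k}^{c'}=\alpha_{k}^{cc'}$: the paper composes an off-diagonal with a diagonal element, using $E_{j_{k},i_{k}}^{a}E_{i_{k},i_{k}}^{a'}=(-1)^{\parity{a}\parity{a'}}E_{i_{k},i_{k}}^{a'}E_{j_{k},i_{k}}^{a}+E_{j_{k},i_{k}}^{aa'}$ together with Proposition~\ref{L:independence-of-alpha} to identify the two resulting operators as the same $\alpha_{k}$, whereas you compose two off-diagonal elements $E_{k,k'}^{c}E_{k',k}^{c'}$ through an auxiliary index $k'$ outside the support of $\bi$, killing the two extra diagonal terms by the weight constraint and by condition (2) of Definition~\ref{D:polynomial}. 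Both computations are valid, both use $n\geq d+1$ in exactly the same way (to find the auxiliary index), and yours has the mild advantage of making the role of the $\Pol_{d}$ axioms symmetric in the two vanishing terms. Your treatment of the wreath compatibility via the conjugation identity $\sigma\alpha(\ba)\sigma^{-1}=\alpha(\ba\sigma^{-1})$, checked on adjacent transpositions and single-slot generators with diagonal matrices $E_{j,j}^{c}$, is an equivalent repackaging of the paper's verification of $\alpha_{p+1}^{a}=\sigma_{p}\circ\alpha_{p}^{a}\circ\sigma_{p}$. One small omission: for $\alpha$ to be a well-defined linear map on $A^{\otimes d}$ you must also record that $a\mapsto\alpha_{k}^{a}$ is $\k$-linear (the paper's items (1) and (2)); this is immediate from linearity of $a\mapsto E_{j_{k},i_{k}}^{a}$ and injectivity, but it should be stated since your ``routine sign bookkeeping'' for $\alpha(\ba)\alpha(\ba')=\alpha(\ba\cdot\ba')$ only handles pure tensors of homogeneous elements.
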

\begin{proof} We first observe that $\alpha_{k}^{1}:=\alpha_{k}^{1_{A}}$ is the identity map for all $k \in [1,d]$.  Namely, fix $(\bj, \bi)_{k}\in \Itwo$ and then 
\[
 \bv_{\bj}^{} \otimes \alpha_{k}^{1}(n)   = E_{j_{k}, i_{k}}^{1} ( \bv_{\bi} \otimes n) =  \bv_{\bj}^{} \otimes n.
\]  From Proposition~\ref{T:injective-theorem}, $\alpha_{k}^{1}(n) = n$, as claimed.

It follows that 
\[
\alpha (1^{\otimes (k-1)} \otimes  a \otimes 1^{\otimes (d-k)}) = \alpha_{1}^{1} \circ \dotsb \circ \alpha_{k-1}^{1} \circ \alpha_{k}^{a} \circ \alpha_{k+1}^{1} \circ\dotsb \circ \alpha_{d}^{1} = \alpha_{k}^{a}
\]
for all $k \in [1,d]$ and all homogeneous $a \in A$.  Since the various $1^{\otimes (k-1)} \otimes  a \otimes 1^{\otimes (d-k)}$ generate $A^{\otimes d}$, it suffices to verify:
\begin{enumerate}
\item for all homogeneous $a, a' \in A$, $\alpha_{k}^{a+a'} = \alpha_{k}^{a}+\alpha_{k}^{a'}$ ;
\item for $c \in \k$ and homogeneous $a \in A$, $\alpha_{k}^{ca} = c\alpha_{k}^{a}$ ;
\item for all homogeneous $a, a' \in A$, if $k \neq k'$, then
\[
\alpha_{k}^{a} \circ \alpha_{k'}^{a'}  = (-1)^{\parity{a} \cdot \parity{a'}} \alpha_{k'}^{a'} \circ \alpha_{k}^{a} \; ;\]
\item for all homogeneous $a,a' \in A$, $\alpha_{k}^{a} \circ \alpha_{k}^{a'} = \alpha_{k}^{aa'}$ .
\end{enumerate}

The first two follow by picking a $(\bj , \bi )_{k} \in \Itwo$, computing $\alpha_{(\bj , \bi )_{k}}^{a+a'}$ and $\alpha_{(\bj , \bi )_{k}}^{ca}$ from their definitions, and applying the fact that $E_{j_{k}, i_{k}}^{a+a'} = E_{j_{k}, i_{k}}^{a}+E_{j_{k}, i_{k}}^{a'}$ and $E_{i_{k}, j_{k}}^{ca} = cE_{i_{k}, j_{k}}^{a}$.

The third follows by letting $\bi \in \tI (n,d)$, computing $\alpha_{(\bi , \bi )_{k}}^{a}$ and $\alpha^{a'}_{(\bi , \bi )_{k'}}$ from their definitions, and applying the fact that $E_{i_{k}, i_{k}}^{a}E_{i_{k'}, i_{k'}}^{a'} = (-1)^{\parity{a}\cdot\parity{a'}}E_{i_{k'}, i_{k'}}^{a'}E_{i_{k}, i_{k}}^{a}$.

We next explain the fourth claim.  Pick $(\bj , \bi )_{k}, (\bi , \bi )_{k} \in \Itwo$ with $i_{k}$ and $j_{k}$ distinct. Two such pairs exist because $n \geq  d+1$.   We wish to show $\alpha_{(\bj , \bi)_{k}}^{a} \circ \alpha_{(\bi, \bi)_{k}}^{a'} = \alpha_{(\bj , \bi )_{k}}^{aa'}$. Using the supercommutator formula \eqref{E:supercommutator}, we have
\[
E_{j_{k}, i_{k}}^{a}E_{i_{k}, i_{k}}^{a'} = (-1)^{\parity{a}\cdot \parity{a'}} E_{i_{k}, i_{k}}^{a'}E_{j_{k}, i_{k}}^{a} + E_{j_{k}, i_{k}}^{aa'}.
\]  Applying both sides to $\bv^{}_{\bi}  \otimes n$ yields, 
\[
\left(E_{j_{k}, i_{k}}^{a}E_{i_{k}, i_{k}}^{a'} \right) ( \bv^{}_{\bi} \otimes n) = E_{j_{k},i_{k}}^{a}(\bv^{}_{\bi} \otimes \alpha_{(\bi , \bi )_{k}}^{a'}(n)) = \bv^{}_{\bj} \otimes \alpha_{(\bj , \bi )_{k}}^{a}(\alpha_{(\bi , \bi )_{k}}^{a'}(n)),
\] and, 
\begin{align*}
\left((-1)^{\parity{a}\cdot \parity{a'}} E_{i_{k}, i_{k}}^{a'}E_{j_{k}, i_{k}}^{a} + E_{j_{k}, i_{k}}^{aa'} \right) (\bv^{ }_{\bi} \otimes n) &= \left((-1)^{\parity{a}\cdot \parity{a'}}E_{i_{k}, i_{k}}^{a'}E_{j_{k}, i_{k}}^{a}\right) (\bv^{ }_{\bi} \otimes n) + E_{j_{k}, i_{k}}^{aa'} (\bv^{ }_{\bi} \otimes n) \\
& = (-1)^{\parity{a}\cdot \parity{a'}} E_{i_{k}, i_{k}}^{a'}   (\bv^{ }_{\bj} \otimes \alpha^{a}_{(\bj , \bi)_{k}} (n)) +  \bv^{ }_{\bj} \otimes \alpha_{(\bj, \bi )_{k}}^{aa'}(n).
\end{align*}
However, by the second property of supermodules in $\Pol_{d}(\gl_{n}(A))$ the first term in the last sum equals zero.  Proposition~\ref{T:injective-theorem} shows that $\alpha_{(\bj , \bi )_{k}}^{a}(\alpha_{(\bi , \bi )_{k}}^{a'}(n))= \alpha_{(\bj, \bi )_{k}}^{aa'}(n)$, as desired.

Since $\alpha$ is a superalgebra map, we can view $N$ as a $A^{\otimes d}$-supermodule via this map. Since $N$ is already assumed to have a $\k S_{d}$-supermodule structure, it remains to show that these two actions are compatible and so define a $\SdwreathA$-supermodule structure on $N$.  To do this it suffices to verify that if $\sigma_{p} = (p, p+1) \in S_{d}$ is a simple transposition, then 
\[
\alpha_{p+1}^{a}   =  \sigma_{p} \circ \alpha_{p}^{a} \circ \sigma_{p}
\] as endomorphisms of $N$.

Fix $(\bj , \bi)_{p} \in \Itwo$. Set $\bj' = (j_{1}, \dotsc j_{p-1}, j_{p+1}, j_{p}, j_{p+2}, \dotsc , j_{d})$ and $\bi' = (i_{1}, \dotsc i_{p-1}, i_{p+1}, i_{p}, i_{p+2}, \dotsc , i_{d})$.  Then $(\bj', \bi')_{p+1} \in \Itwo$.  Under the place permutation action of $S_{d}$ on $E_{n}^{\otimes d}$, we have  $\bv^{}_{\bj'}\sigma_{p} = \bv^{}_{\bj}$ and  $\bv^{}_{\bi}\sigma_{p} = \bv^{}_{\bi'}$.  We then calculate:
\begin{align*}
\bv^{}_{\bj'} \otimes \sigma_{p}\left(\alpha_{(\bj,\bi)_{p}}^{a}(\sigma_{p}(n)) \right) &= \bv^{}_{\bj'}\sigma_{p} \otimes  \alpha_{(\bj,\bi)_{p}}^{a}(\sigma_{p}(n)) \\
&= \bv^{}_{\bj} \otimes  \alpha_{(\bj,\bi)_{p}}^{a}(\sigma_{p}(n)) \\
&= E^{a}_{j_{p}, i_{p}}(\bv^{}_{\bi} \otimes \sigma_{p}(n)) \\
&= E^{a}_{j_{p},i_{p}}(\bv^{}_{\bi}\sigma_{p}\otimes n)\\
&= E^{a}_{j_{p},i_{p}}(\bv^{}_{\bi'}\otimes n)\\
&= E^{a}_{j'_{p+1}, i'_{p+1}}(\bv^{}_{\bi'}\otimes n)\\
&=\bv^{}_{\bj'} \otimes \alpha^{a}_{(\bj' , \bi')_{p+1}}(n).
\end{align*} Proposition~\ref{T:injective-theorem} implies $\sigma_{p}(\alpha_{p}^{a}(\sigma_{p}(n))) =\alpha^{a}_{p+1}(n)$, as desired.
\end{proof}

\subsection{Recovering the action of $\gl_{n}(A)$}\label{SS:Recovering-the-glnA-action} 
We continue to assume that $N$ is a $\k S_{d}$-supermodule such that $E_{n}^{\otimes d} \otimes_{\k S_{d}} N$ has a $U(\gl_{n}(A))$-supermodule structure that makes it a supermodule in $\Pol_{d}(\gl_{n}(A))$, and such that the  action of the subalgebra $U(\gl_{n}(\k))$ on $E_{n}^{\otimes d} \otimes_{\k S_{d}} N$ is the obvious one.  In the previous section, the action of $U(\gl_{n}(A))$ on $E_{n}^{\otimes d} \otimes_{\k S_{d}} N$ was used on certain special elements to define maps  $\alpha^{a}_{k}:N \to N$ for homogeneous $a \in A$ and $k\in [1,d]$.  We now show that these maps completely determine the action of $U(\gl_{n}(A))$ on $E_{n}^{\otimes d} \otimes_{\k S_{d}} N$.

\begin{proposition}\label{P:compatibile-actions} Let $n \geq d+1$ and let $E_{n}^{\otimes d} \otimes_{\k S_{d}} N$ be as above and let $n \in N$. For homogeneous $a \in A$, $p,q \in [1,n]$, and $\bi \in I(n,d)$, we have 
\begin{align}\label{E:action}
E^{a}_{p,q} (v^{}_{\bi}\otimes n) &=  \sum_{k=1}^{d} v^{}_{i_{1}} \otimes \dotsb \otimes v^{}_{i_{k-1}}\otimes  E^{1}_{p,q} v^{}_{i_{k}} \otimes v^{}_{k+1} \otimes \dotsb \otimes v^{}_{i_{d}} \otimes \alpha^{a}_{k}(n) \\ \notag
&= \sum_{k=1}^{d} \delta_{q, i_{k}} v^{}_{i_{1}} \otimes \dotsb \otimes v^{}_{i_{k-1}}\otimes   v^{}_{p} \otimes v^{}_{k+1} \otimes \dotsb \otimes v^{}_{i_{d}} \otimes \alpha^{a}_{k}(n) 
\end{align}

\end{proposition}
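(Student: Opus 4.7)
My plan is to prove the second displayed formula for each generator $v_\bi \otimes n$, since the first follows immediately from $E^1_{p,q} v_{i_k} = \delta_{q, i_k} v_p$. I will first handle the case $\bi \in \tI(n,d)$ of pairwise distinct entries, and then extend to all $\bi \in I(n,d)$ by induction on the number of coincident pairs $R(\bi) := \left| \{ (s,t) : s < t,\, i_s = i_t \} \right|$. Throughout, the arguments will lean on the supercommutator formula in $\gl_n(A)$, the injectivity afforded by Theorem~\ref{T:injective-theorem}, the two conditions defining $\Pol_d$ (Definition~\ref{D:polynomial}), and, crucially, Proposition~\ref{L:independence-of-alpha}.

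For the base case $\bi \in \tI(n,d)$, I split according to whether $q$ appears in $\bi$. When $q \notin \{i_1, \dotsc, i_d\}$, the right-hand side is $0$; moreover, either $\bepsilon_\bi + \varepsilon_p - \varepsilon_q \notin \Lambda^+(n,d)$ (when $p \neq q$) or $\bepsilon_\bi(h_q) = 0$ (when $p = q$), so the two conditions of Definition~\ref{D:polynomial} force the left-hand side to vanish. Otherwise $q = i_k$ for the unique $k$. Set $\bj := (i_1, \dotsc, i_{k-1}, p, i_{k+1}, \dotsc, i_d)$. If $p \notin \{i_1, \dotsc, i_d\} \setminus \{i_k\}$, then $\bj \in \tI(n,d)$, so $(\bj, \bi)_k \in \Itwo$ and the formula is immediate from the definition of $\alpha^a_{(\bj, \bi)_k}$ combined with Proposition~\ref{L:independence-of-alpha}. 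When instead $p = i_\ell$ for some $\ell \neq k$, I choose $r \in [1,n] \setminus \{i_1, \dotsc, i_d\}$ (available because $n \geq d+1$) and use the commutator identity $[E^1_{p, r}, E^a_{r, i_k}] = E^a_{p, i_k}$ (whose unwanted term vanishes since $p \neq i_k$) to reduce to the previous subcase; note also that $E^1_{p, r}(v_\bi \otimes n) = 0$ because $r$ does not appear in $\bi$.

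For the inductive step with $R(\bi) > 0$, pick a position $m$ involved in a coincidence. Since $\bi$ then has at most $d-1$ distinct entries and $n \geq d+1$, one can select $r \in [1,n] \setminus (\{i_1, \dotsc, i_d\} \cup \{p\})$. Let $\bi'$ be obtained from $\bi$ by replacing $i_m$ with $r$; then $R(\bi') < R(\bi)$, and the coproduct action collapses (since $r$ appears only in position $m$ of $\bi'$) to give $v_\bi = E^1_{i_m, r}(v_{\bi'})$. The supercommutator identity
\[
[E^a_{p,q}, E^1_{i_m, r}] = \delta_{q, i_m} E^a_{p, r} - \delta_{r, p} E^a_{i_m, q} = \delta_{q, i_m} E^a_{p, r}
\]
(the second term vanishing because $r \neq p$) then yields
\[
E^a_{p,q}(v_\bi \otimes n) = E^1_{i_m, r}\bigl( E^a_{p,q}(v_{\bi'} \otimes n) \bigr) + \delta_{q, i_m}\, E^a_{p, r}(v_{\bi'} \otimes n).
\]
Applying the inductive hypothesis to both terms on the right, using $\delta_{r, i'_k} = \delta_{k, m}$ (since $r \notin \{i_1, \dotsc, i_d\}$) to collapse the second summand to a single term, gives the desired expression for $\bi$.

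The main obstacle will be the combinatorial bookkeeping in the inductive step. After expanding each term via the inductive hypothesis and applying the coproduct action of $E^1_{i_m, r}$ (which reinserts $i_m$ in place of $r$ at position $m$), one must verify that the result rearranges precisely into $\sum_{k=1}^{d} \delta_{q, i_k}\, v_{i_1} \otimes \dotsb \otimes v_p \otimes \dotsb \otimes v_{i_d} \otimes \alpha^a_k(n)$; the boundary contribution $\delta_{q, i_m} E^a_{p, r}(v_{\bi'} \otimes n)$ is exactly what is needed to supply the missing $k = m$ summand that the first term misses (because $\delta_{q, i'_m} = \delta_{q, r} = 0$ whenever the right-hand side for $\bi$ is nonzero).
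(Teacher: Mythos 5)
Your proposal is correct and follows essentially the same route as the paper: induction on the number of coincident pairs in $\bi$, with the inductive step driven by the supercommutator of $E^{a}_{p,q}$ against $E^{1}_{i_m,r}$ for a fresh index $r$ (available since $n\geq d+1$), and the base case resolved via the definition of the $\alpha$'s together with Proposition~\ref{L:independence-of-alpha} and the weight/$\Pol_d$ conditions. Your only departures are cosmetic refinements — choosing $r\neq p$ to kill one commutator term, and explicitly treating the base subcase where $p$ coincides with another entry of $\bi$ — both of which are sound.
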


\begin{proof}  That the two sums are the same is immediate from the action of $\gl_{n}(\k )$ on $E_{n}$.  We claim they equal the left-hand side of the expression.  We argue by induction on $D(\bi): = \# \{ 1 \leq  r < s \leq d \mid i_{r}= i_{s}\}$, the number of pairs of entries in $\bi$ that have the same value.

The base case is when all entries of $\bi$ are distinct and $D(\bi ) =0$. If $q$ does not appear as an entry of $\bi$, then the left side of \eqref{E:action} is zero because $\bepsilon_{\bi}+ \varepsilon_{p}-\varepsilon_{q}$ is not an element of $\Lambda(n,d)$, and the right side is also zero because $\delta_{q,i_{k}}=0$ for all $k$.  If $q$  appears as an entry of $\bi$, then it appears only once --- say at position $\ell \in [1,d]$.  Let $\bj =(i_{1}, \dotsc , i_{\ell-1}, p, i_{\ell+1}, \dotsc , d)$.  In this case, the left hand side equals $\bv^{}_{\bj} \otimes \alpha^{a}_{(\bj , \bi)_{\ell}}(n)=\bv^{}_{\bj} \otimes \alpha^{a}_{\ell}(n)$ and, by direct calculation the right hand side also equals $\bv^{ }_{\bj } \otimes  \alpha^{a}_{\ell}(n)$.  In both cases \eqref{E:action} is verified when $D(\bi) = 0$ for all $E^{a}_{p,q}$.

We now assume $D(\bi ) > 0$ and that \eqref{E:action} holds for all $E^{a}_{p,q}$ and all $v^{}_{\bj} \otimes n$ whenever $D(\bj ) < D(\bi )$.  Since $D(\bi) > 0$, there exists $1 \leq k_{1} < k_{2} \leq d$ such that $i_{k_{1}}=i_{k_{2}}$.  Let $\ell = k_{1}$.  Since $n \geq d+1$, we may choose $x \in [1,n]$ so that $x$ does not appear as an entry of $\bi$.  Set $\bj = (i_{1}, \dotsc , i_{\ell-1}, x, i_{\ell+1}, \dotsc , i_{d})$.  Observe that $D(\bj ) < D(\bi )$.  From the supercommutator formula \eqref{E:supercommutator}, we have 
\begin{equation}\label{E:action-commutator}
E^{a}_{p,q}E^{1}_{i_{\ell}, x} =E^{1}_{i_{\ell}, x} E^{a}_{p,q} + \delta_{q, i_{\ell}} E^{a}_{p,x} - \delta_{p,x}E^{a}_{i_{\ell}, q}.
\end{equation}
Applying the left side to $\bv^{}_{\bj} \otimes n$ gives:
\[
E^{a}_{p,q}E^{1}_{i_{\ell}, x}(\bv^{}_{\bj} \otimes n) = E^{a}_{p,q}(\bv^{}_{\bi} \otimes n).
\]  This is the left-hand side of \eqref{E:action}.

On the other hand, apply the right-hand side of \eqref{E:action-commutator} to $\bv^{}_{\bj} \otimes n$ and consider each term separately.  By induction, the first term is:
\begin{align*}
E^{1}_{i_{\ell}, x} E^{a}_{p,q} (\bv^{}_{\bj}\otimes n) &=E^{1}_{i_{\ell}, x} \left(  \sum_{k=1}^{d} \delta_{q, j_{k}} v_{j_{1}} \otimes \dotsb \otimes  v_{j_{k-1}}\otimes v_{p} \otimes v_{j_{k+1}} \otimes  \dotsb \otimes v_{j_{d}}  \otimes \alpha_{k}^{a}(n)\right) \\
&=E^{1}_{i_{\ell}, x} \left(  \sum_{k\neq \ell} \delta_{q, i_{k}} v_{i_{1}} \otimes \dotsb \otimes v_{i_{k-1}} \otimes v_{p} \otimes v_{i_{k+1}} \otimes \dotsb \otimes v_{i_{\ell-1}}\otimes  v_{x} \otimes v_{i_{\ell+1}}\otimes  \dotsb \otimes v_{i_{d}}  \otimes \alpha_{k}^{a}(n) \right) \\
 &\hspace{0.5in} + E^{1}_{i_{\ell}, x} \left( \delta_{q,x} v_{i_{1}}\otimes \dotsb \otimes v_{i_{\ell-1}} \otimes v_{p} \otimes v_{i_{\ell+1}} \otimes \dotsb \otimes  v_{i_{d}} \otimes \alpha_{\ell}^{a}(n)\right) \\
&=  \sum_{k\neq \ell} \delta_{q, i_{k}}\delta_{p,x} v_{i_{1}} \otimes \dotsb \otimes v_{i_{k-1}}\otimes  v_{i_{\ell}} \otimes v_{i_{k+1}} \otimes \dotsb \otimes v_{i_{\ell-1}}\otimes  v_{x} \otimes v_{i_{\ell+1}}\otimes  \dotsb \otimes v_{j_{d}}  \otimes \alpha_{k}^{a}(n)  \\
 &\hspace{0.5in}  + \sum_{k\neq \ell} \delta_{q, i_{k}} v_{i_{1}} \otimes \dotsb\otimes v_{i_{k-1}} \otimes v_{p} \otimes v_{i_{k+1}}\otimes  \dotsb \otimes v_{i_{\ell-1}}\otimes  v_{i_{\ell}} \otimes v_{i_{\ell+1}}\otimes  \dotsb \otimes v_{j_{d}}  \otimes \alpha_{k}^{a}(n) \\
 &\hspace{0.5in}   +  \delta_{p,x} \delta_{q,x} v_{i_{1}}\otimes \dotsb v_{i_{\ell-1}} \otimes v_{i_{\ell}} \otimes v_{i_{\ell+1}} \otimes \dotsb v_{i_{d}} \otimes  \alpha_{\ell}^{a}(n).
\end{align*}  By induction, the second term is:
\begin{align*}
\delta_{q, i_{\ell}} E^{a}_{p,x}(\bv^{}_{\bj}\otimes n) &=   \sum_{k=1}^{d} \delta_{q, i_{\ell}} \delta_{x,j_{k}}v_{j_{1}} \otimes \dotsb \otimes v_{j_{k-1}} \otimes v_{p} \otimes v_{j_{k+1}} \otimes  \dotsb \otimes v_{j_{d}}  \otimes \alpha_{k}^{a}(n) \\
&=  \delta_{q, i_{\ell}} v_{i_{1}} \otimes \dotsb \otimes v_{i_{\ell-1}} \otimes v_{p} \otimes v_{i_{\ell+1}} \otimes \dotsb \otimes v_{d} \otimes \alpha^{a}_{\ell}(n).
\end{align*} By induction, the third term is:
\begin{align*}
\delta_{p,x}E^{a}_{i_{\ell},q}(\bv^{}_{\bj}\otimes n) &=  \sum_{k=1}^{d} \delta_{p, x}\delta_{q,j_{k}} v_{j_{1}} \otimes \dotsb  \otimes v_{j_{k-1}}\otimes v_{i_{\ell}} \otimes v_{j_{k+1}}\otimes  \dotsb \otimes v_{j_{d}}  \otimes \alpha_{k}^{a}(n) \\
&=\sum_{k\neq \ell}\delta_{p, x}\delta_{q, i_{k}} v_{i_{1}} \otimes \dotsb \otimes v_{i_{k-1}}\otimes  v_{i_{\ell}}\otimes v_{i_{k+1}} \otimes \dotsb \otimes v_{i_{\ell-1}} \otimes v_{x} \otimes v_{i_{\ell}+1}\otimes \dotsb \otimes  v_{j_{d}}  \otimes \alpha_{k}^{a}(n) \\
& \hspace{0.5in} + \delta_{p, x}\delta_{q,x} v_{i_{1}}\otimes \dotsb \otimes v_{i_{\ell-1}}\otimes v_{i_{\ell}}\otimes v_{i_{\ell+1}} \otimes \dotsb \otimes v_{i_{d}} \otimes  \alpha^{a}_{\ell}(n).
\end{align*}
Combining these three expressions and simplifying yields the right side of \eqref{E:action}.
\end{proof}

\subsection{The Functor $F^{A}_{n,d}$ is Essentially Surjective}\label{SS:Essential-surjectivity}

We can now prove that $F^{A}_{n,d}$ is essentially surjective whenever $n \geq d+1$.

\begin{theorem}\label{T:essentially-surjective}  Let $n \geq d+1$.  The functor 
\[
F^{A}_{n,d}: S_{d}\wreath A-\textrm{mod} \to \Pol_{d}(\gl_{n}(A))
\] is essentially surjective.
\end{theorem}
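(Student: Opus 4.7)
The plan is to reverse-engineer the required $\SdwreathA$-supermodule from $M$ via classical Schur--Weyl duality, equip it with the wreath-product structure using the machinery of Section~\ref{SS:lifting-the-wreath-product-structure}, and finally identify $F^{A}_{n,d}$ applied to this module with $M$ using the explicit action formula of Proposition~\ref{P:compatibile-actions}.

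More precisely, fix $M \in \Pol_{d}(\gl_{n}(A))$. The first step is to observe that, by definition, the restriction of $M$ to $U(\gl_{n}(\k))$ is a polynomial representation of degree $d$. Since $n \geq d+1 \geq d$, Remark~\ref{R:KeyRemark} (classical Schur--Weyl) produces a $\k S_{d}$-supermodule $N$ together with a $U(\gl_{n}(\k))$-supermodule isomorphism $\varphi: E_{n}^{\otimes d} \otimes_{\k S_{d}} N \to M$. Transporting the $U(\gl_{n}(A))$-action on $M$ along $\varphi$ endows $E_{n}^{\otimes d} \otimes_{\k S_{d}} N$ with a $U(\gl_{n}(A))$-supermodule structure in $\Pol_{d}(\gl_{n}(A))$ whose restriction to $U(\gl_{n}(\k))$ is the standard one. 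This places us in the setup of Section~\ref{SS:lifting-the-wreath-product-structure}.

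Second, apply Proposition~\ref{L:independence-of-alpha} and Proposition~\ref{L:alpha-is-an-algebra-map} to obtain the maps $\alpha_{k}^{a} : N \to N$ and to upgrade the $\k S_{d}$-action on $N$ to an honest $\SdwreathA$-supermodule structure. With $N$ now a $\SdwreathA$-supermodule, $F^{A}_{n,d}(N) = V_{n}^{\otimes d} \otimes_{\SdwreathA} N$ is a well-defined object of $\Pol_{d}(\gl_{n}(A))$.

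Third, combine the canonical superspace isomorphism
\[
\Phi: E_{n}^{\otimes d} \otimes_{\k S_{d}} N \xrightarrow{\ \sim\ } V_{n}^{\otimes d} \otimes_{\SdwreathA} N, \qquad \bv_{\bi} \otimes n \mapsto \bv_{\bi}^{\bone} \otimes n,
\]
of \eqref{E:identifyII} with $\varphi$ to produce a candidate isomorphism $\varphi \circ \Phi^{-1} : V_{n}^{\otimes d} \otimes_{\SdwreathA} N \to M$ of $U(\gl_{n}(\k))$-supermodules. The essential content of the theorem is then to show that this map is in fact $U(\gl_{n}(A))$-equivariant; equivalently, that $\Phi$ is a $U(\gl_{n}(A))$-supermodule isomorphism. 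This is the step I expect to be the crux. To handle it, I would compute the action of a generator $E_{p,q}^{a}$ on a simple tensor $\bv_{\bi}^{\bone} \otimes n \in V_{n}^{\otimes d} \otimes_{\SdwreathA} N$ directly: using the $U(\gl_{n}(A))$-action on $V_{n}^{\otimes d}$ and the right-action formula for $A^{\otimes d}$ from Section~\ref{SS:WreathProductSuperalgebras}, one rewrites each resulting $v_{p}^{a}$ at position $k$ as $v_{p}^{1} \cdot a_{k}$, moves $a_{k}$ across the balanced tensor, and invokes the identity $a_{k}\cdot n = \alpha_{k}^{a}(n)$ from the proof of Proposition~\ref{L:alpha-is-an-algebra-map} to land on exactly the right-hand side of formula \eqref{E:action} in Proposition~\ref{P:compatibile-actions}. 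But \eqref{E:action} is precisely how $E_{p,q}^{a}$ acts on $\bv_{\bi} \otimes n \in E_{n}^{\otimes d} \otimes_{\k S_{d}} N$ under the transported structure, so $\Phi$ intertwines the two actions on a spanning set and hence everywhere.

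Combining these steps gives $F^{A}_{n,d}(N) \cong M$, establishing essential surjectivity. The delicate point is the sign-bookkeeping when $a$ is odd in the third step; because the $v_{i_{j}}^{1}$ are all even, however, the signs arising from moving $a_{k}$ across the balanced tensor collapse and match those implicit in $\alpha_{k}^{a}$, so no mismatch arises.
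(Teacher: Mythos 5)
Your proposal is correct and follows essentially the same route as the paper: obtain $N$ from classical Schur--Weyl duality, equip it with the $\SdwreathA$-structure via Proposition~\ref{L:alpha-is-an-algebra-map}, and verify that the identification \eqref{E:identifyII} intertwines the $U(\gl_{n}(A))$-actions by the very computation you describe (rewriting $v_{p}^{a}$ as $v_{p}^{1}\cdot a_{k}$, passing $a_{k}$ across the balanced tensor, and matching against Proposition~\ref{P:compatibile-actions}). The sign issue you flag is handled exactly as you predict, since the basis vectors $v_{i}^{1}$ are even.
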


\begin{proof} Let $M$ be an object of $\Pol_{d}(\gl_{n}(A))$.  From classical Schur--Weyl duality combined with Lemma~\ref{L:alpha-is-an-algebra-map} there is a $\SdwreathA$-supermodule $N$ so that $E_{n}^{\otimes d} \otimes_{\k S_{d}} N$ is isomorphic to $M$ as a $U(\gl_{n}(\k ))$-supermodule and, under that isomorphism, the action of $U(\gl_{n}(A))$ on $E_{n}^{\otimes d} \otimes_{\k S_{d}} N$ is given by Proposition~\ref{P:compatibile-actions}.

It suffices to verify for this $\SdwreathA$-supermodule, $N$, that the $U(\gl_{n}(\k ))$-supermodule isomorphism 
\[
V_{n}^{\otimes d} \otimes_{\SdwreathA} N \cong E_{n}^{\otimes d} \otimes_{\k S_{d}} N
\]
given by \eqref{E:identifyII} defines an isomorphism of $U(\gl_{n}(A))$-supermodules.  That is, the action of $U(\gl_{n}(A))$ on these two superspaces coincides under this isomorphism.

Recall that $V_{n}^{\otimes d} \otimes_{\SdwreathA} N$ is spanned by elements of the form $\bv^{\bone }_{\bi} \otimes n$ for $\bi \in I(n,d)$ and homogeneous $n\in N$.  On these vectors the action of $E_{p,q}^{a}$ for $p,q \in [1,d]$ and homogeneous $a \in A$ is given by: 
\begin{align*}
E^{a}_{p,q} (\bv^{\bone}_{\bi}\otimes n) &= \sum_{k=1}^{d} v^{1}_{i_{1}} \otimes \dotsb \otimes v^{1}_{i_{k-1}} \otimes E^{a}_{p,q}v^{1}_{i_{k}} \otimes v^{1}_{i_{k+1}} \otimes \dotsb \otimes v^{1}_{i_{d}} \otimes n \\
&= \sum_{k=1}^{d} \delta_{q, i_{k}}v^{1}_{i_{1}} \otimes \dotsb \otimes v^{1}_{i_{k-1}} \otimes v^{a}_{p} \otimes v^{1}_{i_{k+1}} \otimes \dotsb \otimes v^{1}_{i_{d}} \otimes n \\
&= \sum_{k=1}^{d} \delta_{q, i_{k}}(v^{1}_{i_{1}} \otimes \dotsb \otimes v^{1}_{i_{k-1}} \otimes v^{1}_{p} \otimes v^{1}_{i_{k+1}} \otimes \dotsb \otimes v^{1}_{i_{d}})a_{k} \otimes n \\
&= \sum_{k=1}^{d} \delta_{q, i_{k}}(v^{1}_{i_{1}} \otimes \dotsb \otimes v^{1}_{i_{k-1}} \otimes v^{1}_{p} \otimes v^{1}_{i_{k+1}} \otimes \dotsb \otimes v^{1}_{i_{d}}) \otimes a_{k}n\\
&= \sum_{k=1}^{d} \delta_{q, i_{k}}(v^{1}_{i_{1}} \otimes \dotsb \otimes v^{1}_{i_{k-1}} \otimes v^{1}_{p} \otimes v^{1}_{i_{k+1}} \otimes \dotsb \otimes v^{1}_{i_{d}}) \otimes \alpha^{a}_{k}(n),
\end{align*} where $a_{k} := 1^{\otimes (k-1)} \otimes a \otimes 1^{\otimes (d-k)}$.   Comparing this calculation with Proposition~\ref{P:compatibile-actions} confirms the actions coincide. 
\end{proof}

\subsection{Main Theorem}\label{SS:MainTheorem}

Combining Theorems~\ref{P:SWisFaithful}, \ref{T:FullFunctor}, and \ref{T:essentially-surjective} gives our main result. 
\begin{theorem}\label{T:main-theorem}  Let $n \geq d+1$.  The functor  
\[
F^{A}_{n,d}: \SdwreathA-\textrm{mod} \to \Pol_{d}(\gl_{n}(A))
\] is an equivalence of categories.
\end{theorem}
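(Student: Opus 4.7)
The strategy is to verify that $F^A_{n,d}$ is faithful, full, and essentially surjective, since the three together imply that it is an equivalence of categories. Fortunately, each of these three properties has been isolated as its own result in the preceding subsections, so the plan reduces to marshaling Proposition~\ref{P:SWisFaithful}, Theorem~\ref{T:FullFunctor}, and Theorem~\ref{T:essentially-surjective} and checking that the hypothesis $n \geq d+1$ is strong enough for all three to apply.

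Faithfulness and fullness only require the weaker bound $n \geq d$, so they are immediate once we invoke Proposition~\ref{P:SWisFaithful} and Theorem~\ref{T:FullFunctor}. Both reductions fundamentally exploit the commuting square \eqref{E:functor-square}, the classical Schur--Weyl equivalence for $F^{\k}_{n,d}$ recalled in Remark~\ref{R:KeyRemark}, and faithfulness of the restriction functors. Fullness additionally requires the key injectivity statement Theorem~\ref{T:injective-theorem} in order to check that a classical $\k S_d$-equivariant lift is automatically $\SdwreathA$-equivariant, since the action of $c_k = 1^{\otimes (k-1)}\otimes c\otimes 1^{\otimes (d-k)}$ on the tensor factor can be realized by the diagonal matrix $E^c_{k,k}$ acting on $\bv = v^1_1 \otimes \dotsb \otimes v^1_d$.

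The substantive part, and the only step that actually uses $n \geq d+1$, is essential surjectivity, handled by Theorem~\ref{T:essentially-surjective}. I expect this to be the main obstacle, since here one must reconstruct the $\SdwreathA$-supermodule $N$ from a given $M \in \Pol_d(\gl_n(A))$. The plan is: apply classical Schur--Weyl to the restriction of $M$ to $U(\gl_n(\k))$ to obtain a $\k S_d$-module $N$ with $M \cong E_n^{\otimes d} \otimes_{\k S_d} N$; then define operators $\alpha^a_k$ on $N$ via the action of off-diagonal elements $E^a_{j_k, i_k}$ on a weight space, using Theorem~\ref{T:injective-theorem} to see that $\alpha^a_k$ is well-defined; verify independence of auxiliary choices (Proposition~\ref{L:independence-of-alpha}) by a sequence of supercommutator computations where the bound $n \geq d+1$ is needed to find a spectator index outside the support of $\bi$; check via Proposition~\ref{L:alpha-is-an-algebra-map} that the $\alpha^a_k$ assemble into an $A^{\otimes d}$-action compatible with the $S_d$-action, where the bound $n \geq d+1$ reappears in the multiplicativity check $\alpha^a_k\alpha^{a'}_k = \alpha^{aa'}_k$ (which needs two distinct indices $i_k \neq j_k$); and finally use Proposition~\ref{P:compatibile-actions} to match the resulting $\gl_n(A)$-action on $V_n^{\otimes d} \otimes_{\SdwreathA} N$ with the original action on $M$ under the identification \eqref{E:identifyII}. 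Once the three properties are in place, the equivalence follows from the standard categorical fact, completing the proof.
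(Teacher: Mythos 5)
Your proposal is correct and matches the paper's proof, which likewise deduces the equivalence by combining Proposition~\ref{P:SWisFaithful} (faithfulness), Theorem~\ref{T:FullFunctor} (fullness), and Theorem~\ref{T:essentially-surjective} (essential surjectivity). Your accounting of where the bound $n \geq d+1$ is actually needed---only in the essential surjectivity argument, via the spectator-index and distinct-index arguments---also agrees with the paper.
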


\begin{remark}\label{R:finite-dimensional-case}
 If $n \geq d+1$ and $N$ is a $\SdwreathA$-supermodule, then $F^{A}_{n,d}(N) \cong E_{n}^{\otimes d} \otimes_{\k S_{d}} N$ is finite-dimensional if and only if $N$ is finite-dimensional. Thus, the restriction of $F^{A}_{n,d}$ defines an equivalence between the full subcategories of finite-dimensional supermodules.
\end{remark}

\subsection{Some Consequences}  We give a few example applications of the main theorem.  For simplicity's sake, throughout this section we assume $A$ is finite-dimensional and that $\k$ is a splitting field for $A$ (e.g., $\k$ is algebraically closed).

\begin{corollary}\label{C:semisimple}  For all $n \geq d+1$,  the category $\Pol_{d}(\gl_{n}(A))$ is semisimple if and only if $A$ is semisimple.
\end{corollary}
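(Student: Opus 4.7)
The plan is to use the equivalence of Theorem~\ref{T:main-theorem} to reduce the question to one about the finite-dimensional superalgebra $B := \SdwreathA$. For any finite-dimensional superalgebra, semisimplicity of the category of supermodules is equivalent to the Jacobson radical of the algebra being zero, i.e., to semisimplicity of the algebra. Thus the corollary reduces to the ring-theoretic statement that $\SdwreathA$ is semisimple if and only if $A$ is.

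For the direction ``$A$ semisimple $\Rightarrow \SdwreathA$ semisimple'', I would first argue that $A^{\otimes d}$ is itself semisimple: since $\k$ is a splitting field of characteristic zero, $A$ decomposes as a direct product of the two types of split simple associative superalgebras ($M_{m|n}(\k)$ and $Q_n(\k)$), and super-tensor products of these are again of this form and hence semisimple. Next, since $\SdwreathA$ is the smash product of $A^{\otimes d}$ with $\k S_d$ and $d!$ is invertible in $\k$, a standard Maschke-style averaging argument using the idempotent $e = \tfrac{1}{d!}\sum_{\sigma \in S_d}\sigma$ promotes any $A^{\otimes d}$-supermodule splitting to a $\SdwreathA$-supermodule splitting, giving semisimplicity of $\SdwreathA$.

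For the converse, suppose $A$ is not semisimple, so its Jacobson radical $J = J(A)$ is nonzero. Then $J \otimes A^{\otimes (d-1)}$ is a nonzero nilpotent two-sided ideal of $A^{\otimes d}$, so in particular $J(A^{\otimes d}) \neq 0$. The Jacobson radical is preserved by all superalgebra automorphisms, and in particular by the signed permutation action of $S_d$ on $A^{\otimes d}$; hence $J(A^{\otimes d})$ is $S_d$-stable. Using the commutation relation $\sigma \cdot a = \sigma(a) \cdot \sigma$ inside $\SdwreathA$, one checks that $\k S_d \cdot J(A^{\otimes d})$ is a two-sided ideal of $\SdwreathA$, and it remains nilpotent because $(\k S_d \cdot J(A^{\otimes d}))^{n} \subseteq \k S_d \cdot J(A^{\otimes d})^{n}$. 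This forces $J(\SdwreathA) \neq 0$.

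The main technical point will be verifying the super-Maschke averaging rigorously: the sign conventions from the super-tensor product structure and from the signed $S_d$-action have to be compatible, but since the averaging idempotent $e$ is purely even and the $S_d$-action on $A^{\otimes d}$ is by (signed-permutation) superalgebra automorphisms, the classical argument carries over essentially verbatim.
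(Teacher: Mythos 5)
Your reduction of the corollary to the semisimplicity of $\SdwreathA$ via the equivalence of Theorem~\ref{T:main-theorem} is exactly the paper's first step, but from there the two arguments diverge. The paper outsources the ring theory: it cites \cite[Lemma 2.6]{BK} for the fact that the Jacobson radical of a finite-dimensional superalgebra is a homogeneous ideal coinciding with the ungraded radical, and \cite[Section 3]{CT} for the isomorphism $(\SdwreathA)/J(\SdwreathA)\cong S_{d}\wr\bigl(A/J(A)\bigr)$, from which both implications follow at once. You instead prove the two implications directly and self-containedly: for ``$A$ semisimple $\Rightarrow$ $\SdwreathA$ semisimple'' you first get semisimplicity of $A^{\otimes d}$ from the classification of split simple superalgebras (legitimate here, since the section's standing hypotheses are that $A$ is finite-dimensional and $\k$ is a splitting field, and the super tensor products $M\otimes M$, $M\otimes Q$, $Q\otimes Q$ are again of these types) and then run a super-Maschke averaging over $S_{d}$, which is unproblematic because the averaging idempotent is even and $S_{d}$ acts by superalgebra automorphisms; for the converse you exhibit the nonzero nilpotent two-sided ideal $\k S_{d}\cdot\bigl(J(A)\otimes A^{\otimes(d-1)}\bigr)$ of $\SdwreathA$. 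Both routes are correct. Yours is more elementary and avoids the external references, at the cost of not identifying the semisimple quotient of $\SdwreathA$ precisely (which is what the paper's citation of \cite{CT} also feeds into Corollary~\ref{C:Classifying-simples}). Two small points to tighten: (i) your opening assertion that semisimplicity of the supermodule category is equivalent to $J(\SdwreathA)=0$ needs the homogeneity of the radical (the parity involution is an algebra automorphism and $\operatorname{char}\k\neq 2$), which is exactly the \cite{BK} input the paper records; (ii) in the nilpotency estimate the exponent should not be denoted $n$ (which is already the rank of $\gl_{n}$), and the containment $(\k S_{d}\cdot J(A^{\otimes d}))^{N}\subseteq \k S_{d}\cdot J(A^{\otimes d})^{N}$ uses, as you correctly note, that $J(A^{\otimes d})$ is stable under the signed place-permutation automorphisms.
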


\begin{proof} Write $J(A)$ for the Jacobson radical of the superalgebra $A$. By \cite[Lemma 2.6]{BK} this is a homogeneous ideal and coincides with the Jacobson radical of $A$ as an algebra. The arguments of \cite[Section 3]{CT}  show that
\[
(\SdwreathA )/J(\SdwreathA ) \cong S_{d} \wr \left(A/J(A) \right).
\]  From this we see the $\SdwreathA$ is semisimple if and only if $A$ is semisimple.  The result follows.
\end{proof}

By \cite[Section 6]{CT}, if $A$ is quasi-hereditary algebra, then so is $\SdwreathA$ and, hence, the category of $\SdwreathA$-modules is a highest weight category.  Thus, we have the following result.

\begin{corollary}\label{C:quasi-hereditary}  If $A$ is a quasi-hereditary, then for all $n \geq d+1$ the category $\Pol_{d}(\gl_{n}(A))$ is a highest weight category.
\end{corollary}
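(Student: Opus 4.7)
The plan is to transport the highest weight structure from the $\SdwreathA$-side of the equivalence in Theorem~\ref{T:main-theorem} over to $\Pol_{d}(\gl_{n}(A))$. Essentially all of the real content is supplied by external inputs; very little new argument is required.

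First I would invoke \cite[Section 6]{CT} to conclude that, because $A$ is quasi-hereditary, the wreath product $\SdwreathA$ is also quasi-hereditary. This is the one genuinely substantive input and can be taken as a black box, exactly as was done in the proof of Corollary~\ref{C:semisimple} with the analogous result on semisimplicity. By the Cline--Parshall--Scott correspondence between finite-dimensional quasi-hereditary algebras and highest weight categories with finitely many simples, the category of $\SdwreathA$-modules is then a highest weight category; since $\k$ has characteristic zero and morphisms in the paper are not required to preserve parity, one passes between modules and supermodules for $\SdwreathA$ without losing this structure.

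Next I would apply Theorem~\ref{T:main-theorem}, which for $n \geq d+1$ exhibits $F^{A}_{n,d}$ as an equivalence $\SdwreathA\text{-mod} \xrightarrow{\sim} \Pol_{d}(\gl_{n}(A))$. The defining data of a highest weight category---a poset indexing the isomorphism classes of simples, the associated standard objects, and the exactness and $\Delta$-filtration axioms they satisfy---are intrinsic to the abelian category and transport across any equivalence of abelian categories. Applying $F^{A}_{n,d}$ to the standard objects for $\SdwreathA\text{-mod}$ therefore equips $\Pol_{d}(\gl_{n}(A))$ with a compatible family of standard objects and a partial order on simples, establishing that it is itself a highest weight category.

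The only potential obstacle is checking that the Chuang--Tan machinery, which is written for ordinary quasi-hereditary algebras, applies verbatim in the $\Z_{2}$-graded setting; but this is routine given the paper's conventions (supermodule homomorphisms are not assumed parity-preserving, so $\SdwreathA\text{-mod}$ agrees with the module category of $\SdwreathA$ viewed as an ungraded algebra). Once that is acknowledged, the remainder of the argument is purely formal transport of structure along the equivalence of Theorem~\ref{T:main-theorem}.
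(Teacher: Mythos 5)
Your proposal is correct and is essentially identical to the paper's argument: the paper likewise cites \cite[Section 6]{CT} to conclude that $\SdwreathA$ is quasi-hereditary, so that $\SdwreathA$-mod is a highest weight category, and then transports this structure across the equivalence of Theorem~\ref{T:main-theorem}. Your additional remarks on the super versus ordinary module categories and on the formal transport of standard objects are sensible elaborations of the same route.
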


For $k \geq 0$, let $\mathbf{P}(k)$ be the set of all partitions of $k$.  For short, if $\mu = (\mu_{1}, \mu_{2}, \dotsc  )$ is a partition of some nonnegative integer, write $|\mu| = \sum_{r} \mu_{r}$ for that integer.  We say a partition $\mu$ is \emph{strict} if for all $r \geq 1$, $\mu_{r} = \mu_{r+1}$ implies $\mu_{r}=0$.

Say $A$ is a finite-dimensional $\k$-superalgebra and that $S = \left\{S_{1}, \dotsc , S_{t} \right\}$ is a complete, irredundant set of simple $A$-supermodules up to isomorphism.  Recall that a simple $A$-supermodule may either be of type M or of type Q. It is known that the simple $\SdwreathA$-supermodules are indexed by the set 
\[
\Pi^{d}(A):= \left\{\pi: S \to \cup_{k \geq 0} \mathbf{P}_{k} \; \left| \; \sum_{r=1}^{t} |\pi (S_{r})|=d \text{ and $\pi(S_{r})$ is strict whenever $S_{r}$ is of type Q} \right.  \right\}.
\] For example, see \cite{MacD} for the construction of the simples when $A$ is an algebra, and see \cite{RossoSavage} or \cite{DKMZ} for the construction of the simples when $A$ is a superalgebra. Write $S^{\pi}$ for the simple $\SdwreathA$-supermodule labelled by $\pi \in \Pi^{d}(A)$.  We note that the type of $S^{\pi}$ can be determined from the function $\pi$ and that $F^{A}_{n,d}$ preserves the type of simple supermodules. Since we do not use this information, we omit the details.

\begin{corollary}\label{C:Classifying-simples}  The set 
\[
\left\{L_{n}^{A}(\pi) : = F^{A}_{n,d}(S^{\pi}) \mid \pi \in \Pi^{d}(A) \right\}
\] is a complete irredundant set of simple supermodules in $\Pol_{d}(\gl_{n}(A))$ when $n \geq d+1$.  
\end{corollary}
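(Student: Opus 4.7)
The plan is to deduce this directly from Theorem~\ref{T:main-theorem}, which establishes that $F^{A}_{n,d}$ is an equivalence of categories between $\SdwreathA$-supermodules and $\Pol_d(\gl_n(A))$ whenever $n \geq d+1$. Any equivalence of categories preserves and reflects both isomorphism classes and simple objects, since simplicity (admitting no proper nonzero subobject) and isomorphism are purely categorical notions and hence are transported across equivalences. In the supermodule setting, subsupermodules are precisely the kernels of homogeneous morphisms, so the usual notion of a simple supermodule coincides with the categorical notion of a simple object in $\SdwreathA$-supermodules (and analogously in $\Pol_d(\gl_n(A))$). Consequently, $F^{A}_{n,d}$ will induce a bijection between isomorphism classes of simple $\SdwreathA$-supermodules and isomorphism classes of simple objects in $\Pol_d(\gl_n(A))$.

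It will then remain only to invoke the classification of simple $\SdwreathA$-supermodules recalled just prior to the statement: the set $\{S^\gamma \mid \gamma \in \Gamma^d(A)\}$ is a complete irredundant set of simple $\SdwreathA$-supermodules (see \cite{MacD,RossoSavage,DKMZ}). Pushing this set through $F^{A}_{n,d}$ and defining $L_{n}^{A}(\gamma) := F^{A}_{n,d}(S^\gamma)$ will produce the claimed complete irredundant set of simples in $\Pol_d(\gl_n(A))$. There is no substantive obstacle: the corollary is a formal consequence of the equivalence, with all of the real content already absorbed into Theorem~\ref{T:main-theorem} and into the cited classification of simple wreath product supermodules.
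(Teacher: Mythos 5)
Your proposal is correct and matches the paper's (implicit) reasoning: the corollary is stated there without proof precisely because it is the formal consequence of Theorem~\ref{T:main-theorem} together with the cited classification of simple $\SdwreathA$-supermodules, exactly as you argue. Nothing further is needed.
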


\subsection{Sharpness of the Bound}\label{SS:Sharpness}  Given that the equivalence of categories holds when $n\geq d$ in the classical (i.e., $A= \k$) case, we could ask if the bound $n \geq d+1$ here is an artifact of the proof.  The following example shows this is not the case.  A similar observation about the bound was made for $A=\k [t^{\pm 1}]$ in \cite{Flicker}.



\begin{example}\label{Ex:Example2}  Let $d=n$, and let $A= \k[x]/(x^{2}-1)$ be the group algebra of the cyclic group of order two.  We use the notation established in Example~\ref{Ex:Example1}.    


Let $L= \k u$ be a one-dimensional vector space viewed as a $U(\gl_{n}(A))$-module by declaring that  $x.u=0$ for $x \in \mathfrak{sl}_{n}(A)$, $I_{n}.u=nu$, and $xI_{n}. u= u$.
Observe that $L$ is a polynomial representation of $U(\gl_{n}(\k ))$ of degree $d=n$.  The spanning vector has weight $\bepsilon_{\bi}=\varepsilon_{1}+\dotsb +\varepsilon_{n}$ and, since $\bepsilon_{\bi}(h_{i}) = 1$ for $i=1, \dotsc , n$, this module vacuously satisfies the second condition of Definition~\ref{D:polynomial}. Thus, $L$ is in $\Pol_{d}(\gl_{n}(A))$.  In fact, it is not difficult to see that $L$ is in the image of $F^{A}_{n,d}$.

Let $T$ be a two-dimensional $\k$-vector space viewed as a $\mathfrak{sl}_{n}(A)$-module with the trivial action.  Because $U(\gl_{n}(A)) \cong U(\mathfrak{sl}_{n}(A)) \otimes \k[I_{n}] \otimes \k [xI_{n}]$, we may extend the action of $U(\mathfrak{sl}_{n}(A))$ to $U(\gl_{n}(A))$  by
\[
I_{n} \mapsto \left( \begin{matrix}
n   & 0     \\
0   & n  
\end{matrix}\right),
\hspace{0.5in}
xI_{n} \mapsto \left( \begin{matrix}
1   & 1     \\
0   &   1
\end{matrix}\right).
\]

Observe that $T$ restricted to $U(\gl_{n}(\k))$ is a polynomial representation of degree $d=n$.  It again vacuously satisfies the second condition of Definition~\ref{D:polynomial}. Therefore, $T$ is in $\Pol_{d}(\gl_{d}(A))$ and is a non-split extension of $L$ with itself.  On the other hand, $\SdwreathA$ is a semisimple algebra.  Therefore, $\Pol_{d}(\gl_{d}(A))$ and $\SdwreathA$-mod cannot be equivalent in this case.
\end{example}

This example also illustrates another difference with the classical situation.  Let 
\[
S^{A}(n,d) = \End_{\SdwreathA}\left(V_{n}^{\otimes d} \right)
\] be the generalized Schur algebra.  As in the classical case, there is a Schur functor defined via an idempotent, 
\[
{S}^{A}_{n,d}: S^{A}(n,d)-\textrm{mod} \to \SdwreathA -\textrm{mod}.
\]
This is an equivalence of categories whenever $n \geq d$.  See \cite[Section~5]{EvseevKleshchev} for details.

By \cite[Lemma 11.1.1]{DKMZ}, there is a surjective algebra map 
\[
\zeta_{n,d}: U(\gl_{n}(A)) \to S^{A}(n,d).
\]  By inflation through this map one can view the category of $S^{A}(n,d)$-supermodules as a full subcategory of $U(\gl_{n}(A))$-supermodules.  In the classical situation, this subcategory is $\Pol_{d}(\gl_{n}(\k))$.  The previous example shows that it can be a proper subcategory of $\Pol_{d}(\gl_{n}(A))$ when $n=d$.

\makeatletter
\renewcommand*{\@biblabel}[1]{\hfill#1.}
\makeatother

\bibliographystyle{alpha}
\bibliography{schur-weyl-duality-and-wreath-products}

@article {CT,
    AUTHOR = {Chuang, Joseph and Tan, Kai Meng},
     TITLE = {Representations of wreath products of algebras},
   JOURNAL = {Math. Proc. Cambridge Philos. Soc.},
  FJOURNAL = {Mathematical Proceedings of the Cambridge Philosophical
              Society},
    VOLUME = {135},
      YEAR = {2003},
    NUMBER = {3},
     PAGES = {395--411},
      ISSN = {0305-0041,1469-8064},
   MRCLASS = {16G10},
  MRNUMBER = {2018255},
MRREVIEWER = {Fl\'avio\ Ulhoa\ Coelho},
       DOI = {10.1017/S0305004103006984},
       URL = {https://doi.org/10.1017/S0305004103006984},
}

@article {Flicker2,
    AUTHOR = {Flicker, Yuval Z.},
     TITLE = {Affine super {S}chur duality},
   JOURNAL = {Publ. Res. Inst. Math. Sci.},
  FJOURNAL = {Publications of the Research Institute for Mathematical
              Sciences},
    VOLUME = {59},
      YEAR = {2023},
    NUMBER = {1},
     PAGES = {153--202},
      ISSN = {0034-5318,1663-4926},
   MRCLASS = {17B67 (16W10)},
  MRNUMBER = {4556779},
MRREVIEWER = {Zhihua\ Chang},
       DOI = {10.4171/prims/59-1-5},
       URL = {https://doi.org/10.4171/prims/59-1-5},
}

@article {WanWang,
    AUTHOR = {Wan, Jinkui and Wang, Weiqiang},
     TITLE = {Modular representations and branching rules for wreath {H}ecke
              algebras},
   JOURNAL = {Int. Math. Res. Not. IMRN},
  FJOURNAL = {International Mathematics Research Notices. IMRN},
      YEAR = {2008},
     PAGES = {Art. ID rnn128, 31},
      ISSN = {1073-7928,1687-0247},
   MRCLASS = {20C08},
  MRNUMBER = {2449051},
MRREVIEWER = {Vanessa\ Miemietz},
       DOI = {10.1093/imrn/rnn128},
       URL = {https://doi.org/10.1093/imrn/rnn128},
}

@article {RossoSavage-Wreath,
    AUTHOR = {Rosso, Daniele and Savage, Alistair},
     TITLE = {Quantum affine wreath algebras},
   JOURNAL = {Doc. Math.},
  FJOURNAL = {Documenta Mathematica},
    VOLUME = {25},
      YEAR = {2020},
     PAGES = {425--456},
      ISSN = {1431-0635,1431-0643},
   MRCLASS = {20C08 (16S35)},
  MRNUMBER = {4112762},
MRREVIEWER = {Jun\ Hu},
}

@article {Savage,
    AUTHOR = {Savage, Alistair},
     TITLE = {Affine wreath product algebras},
   JOURNAL = {Int. Math. Res. Not. IMRN},
  FJOURNAL = {International Mathematics Research Notices. IMRN},
      YEAR = {2020},
    NUMBER = {10},
     PAGES = {2977--3041},
      ISSN = {1073-7928,1687-0247},
   MRCLASS = {20C08 (16W55 18N25)},
  MRNUMBER = {4098632},
MRREVIEWER = {Shoumin\ Liu},
       DOI = {10.1093/imrn/rny092},
       URL = {https://doi.org/10.1093/imrn/rny092},
}

@article {Garland,
    AUTHOR = {Garland, Howard},
     TITLE = {The arithmetic theory of loop algebras},
   JOURNAL = {J. Algebra},
  FJOURNAL = {Journal of Algebra},
    VOLUME = {53},
      YEAR = {1978},
    NUMBER = {2},
     PAGES = {480--551},
      ISSN = {0021-8693},
   MRCLASS = {17B65 (17B35 20G05 22E47)},
  MRNUMBER = {502647},
MRREVIEWER = {James\ F.\ Hurley},
       DOI = {10.1016/0021-8693(78)90294-6},
       URL = {https://doi.org/10.1016/0021-8693(78)90294-6},
}

@article {CGLMUW,
    AUTHOR = {Chari, Vyjayanthi and Grimley, Lauren and Lin, Zongzhu and
              Mangum, Chad R. and Uhl, Christine and Wilson, Evan},
     TITLE = {Schur-{W}eyl duality for toroidal algebras of type {$A$}},
   JOURNAL = {Comm. Algebra},
  FJOURNAL = {Communications in Algebra},
    VOLUME = {52},
      YEAR = {2024},
    NUMBER = {11},
     PAGES = {4749--4766},
      ISSN = {0092-7872,1532-4125},
   MRCLASS = {17B67 (20G43)},
  MRNUMBER = {4791656},
MRREVIEWER = {Zhihua\ Chang},
       DOI = {10.1080/00927872.2024.2357732},
       URL = {https://doi.org/10.1080/00927872.2024.2357732},
}

@article {BereleRegev,
    AUTHOR = {Berele, A. and Regev, A.},
     TITLE = {Hook {Y}oung diagrams with applications to combinatorics and
              to representations of {L}ie superalgebras},
   JOURNAL = {Adv. in Math.},
  FJOURNAL = {Advances in Mathematics},
    VOLUME = {64},
      YEAR = {1987},
    NUMBER = {2},
     PAGES = {118--175},
      ISSN = {0001-8708},
   MRCLASS = {20C05 (05A17 17B70 20C35)},
  MRNUMBER = {884183},
MRREVIEWER = {G.\ D.\ James},
       DOI = {10.1016/0001-8708(87)90007-7},
       URL = {https://doi.org/10.1016/0001-8708(87)90007-7},
}

@article {Sergeev,
    AUTHOR = {Sergeev, A. N.},
     TITLE = {Tensor algebra of the identity representation as a module over
              the {L}ie superalgebras {${\rm Gl}(n,\,m)$}\ and {$Q(n)$}},
   JOURNAL = {Mat. Sb. (N.S.)},
  FJOURNAL = {Matematicheski\u i\ Sbornik. Novaya Seriya},
    VOLUME = {123(165)},
      YEAR = {1984},
    NUMBER = {3},
     PAGES = {422--430},
      ISSN = {0368-8666},
   MRCLASS = {17B70 (81G20)},
  MRNUMBER = {735715},
MRREVIEWER = {Ji\v r\'i\ Niederle},
}

@article {EvseevKleshchev,
    AUTHOR = {Evseev, Anton and Kleshchev, Alexander},
     TITLE = {Turner doubles and generalized {S}chur algebras},
   JOURNAL = {Adv. Math.},
  FJOURNAL = {Advances in Mathematics},
    VOLUME = {317},
      YEAR = {2017},
     PAGES = {665--717},
      ISSN = {0001-8708,1090-2082},
   MRCLASS = {20G43 (16G30 16T10 20C08)},
  MRNUMBER = {3682681},
MRREVIEWER = {Jun\ Hu},
       DOI = {10.1016/j.aim.2017.07.012},
       URL = {https://doi.org/10.1016/j.aim.2017.07.012},
}

@article {Flicker,
    AUTHOR = {Flicker, Yuval Z.},
     TITLE = {Affine {S}chur duality},
   JOURNAL = {J. Lie Theory},
  FJOURNAL = {Journal of Lie Theory},
    VOLUME = {31},
      YEAR = {2021},
    NUMBER = {3},
     PAGES = {681--718},
      ISSN = {0949-5932},
   MRCLASS = {17B67 (17B10 17B20 17B65 22E50 22E65 22E67)},
  MRNUMBER = {4257166},
MRREVIEWER = {Zhihua\ Chang},
}

@article {MacD,
    AUTHOR = {Macdonald, I. G.},
     TITLE = {Polynomial functors and wreath products},
   JOURNAL = {J. Pure Appl. Algebra},
  FJOURNAL = {Journal of Pure and Applied Algebra},
    VOLUME = {18},
      YEAR = {1980},
    NUMBER = {2},
     PAGES = {173--204},
      ISSN = {0022-4049,1873-1376},
   MRCLASS = {15A72 (14C20 16A64 18F30 20C15 20G05)},
  MRNUMBER = {585222},
MRREVIEWER = {A.\ Kh.\ Kushkule\u i},
       DOI = {10.1016/0022-4049(80)90128-0},
       URL = {https://doi.org/10.1016/0022-4049(80)90128-0},
}

@misc{DKMZ,
      title={Superalgebra deformations of web categories: finite webs}, 
      author={Nicholas Davidson and Jonathan R. Kujawa and Robert Muth and Jieru Zhu},
      year={2023},
      eprint={2302.04073},
      archivePrefix={arXiv},
      primaryClass={math.RT},
      url={https://arxiv.org/abs/2302.04073}, 
}

@article {RossoSavage,
    AUTHOR = {Rosso, Daniele and Savage, Alistair},
     TITLE = {A general approach to {H}eisenberg categorification via wreath
              product algebras},
   JOURNAL = {Math. Z.},
  FJOURNAL = {Mathematische Zeitschrift},
    VOLUME = {286},
      YEAR = {2017},
    NUMBER = {1-2},
     PAGES = {603--655},
      ISSN = {0025-5874,1432-1823},
   MRCLASS = {18D10 (17B10 17B65 19A22)},
  MRNUMBER = {3648512},
MRREVIEWER = {Daniel\ Ion\ Bulacu},
       DOI = {10.1007/s00209-016-1776-9},
       URL = {https://doi.org/10.1007/s00209-016-1776-9},
}

@book {GreenBook,
    AUTHOR = {Green, J. A.},
     TITLE = {Polynomial representations of {${\rm GL}_{n}$}},
    SERIES = {Lecture Notes in Mathematics},
    VOLUME = {830},
   EDITION = {augmented},
      NOTE = {With an appendix on Schensted correspondence and Littelmann
              paths by K. Erdmann, Green and M. Schocker},
 PUBLISHER = {Springer, Berlin},
      YEAR = {2007},
     PAGES = {x+161},
      ISBN = {978-3-540-46944-5; 3-540-46944-3},
   MRCLASS = {20G05 (05E15)},
  MRNUMBER = {2349209},
MRREVIEWER = {David\ Hemmer},
}

@article {BGK,
    AUTHOR = {Berman, Stephen and Gao, Yun and Krylyuk, Yaroslav S.},
     TITLE = {Quantum tori and the structure of elliptic quasi-simple {L}ie
              algebras},
   JOURNAL = {J. Funct. Anal.},
  FJOURNAL = {Journal of Functional Analysis},
    VOLUME = {135},
      YEAR = {1996},
    NUMBER = {2},
     PAGES = {339--389},
      ISSN = {0022-1236,1096-0783},
   MRCLASS = {17B05},
  MRNUMBER = {1370607},
MRREVIEWER = {Alexander\ N.\ Rudy\u i},
       DOI = {10.1006/jfan.1996.0013},
       URL = {https://doi.org/10.1006/jfan.1996.0013},
}

@book {MolevBook,
    AUTHOR = {Molev, Alexander},
     TITLE = {Yangians and classical {L}ie algebras},
    SERIES = {Mathematical Surveys and Monographs},
    VOLUME = {143},
 PUBLISHER = {American Mathematical Society, Providence, RI},
      YEAR = {2007},
     PAGES = {xviii+400},
      ISBN = {978-0-8218-4374-1},
   MRCLASS = {17B37 (16W35 82B23)},
  MRNUMBER = {2355506},
MRREVIEWER = {Ian\ M.\ Musson},
       DOI = {10.1090/surv/143},
       URL = {https://doi.org/10.1090/surv/143},
}

@article {LNX,
    AUTHOR = {Lai, Chun-Ju and Nakano, Daniel K. and Xiang, Ziqing},
     TITLE = {Quantum wreath products and {S}chur-{W}eyl duality {I}},
   JOURNAL = {Forum Math. Sigma},
  FJOURNAL = {Forum of Mathematics. Sigma},
    VOLUME = {12},
      YEAR = {2024},
     PAGES = {Paper No. e108, 38},
      ISSN = {2050-5094},
   MRCLASS = {20C08 (20G43)},
  MRNUMBER = {4831144},
MRREVIEWER = {Li\ Luo},
       DOI = {10.1017/fms.2024.103},
       URL = {https://doi.org/10.1017/fms.2024.103},
}

@article {BagciChamberlin,
    AUTHOR = {Bagci, Irfan and Chamberlin, Samuel},
     TITLE = {Integral bases for the universal enveloping algebras of map
              superalgebras},
   JOURNAL = {J. Pure Appl. Algebra},
  FJOURNAL = {Journal of Pure and Applied Algebra},
    VOLUME = {218},
      YEAR = {2014},
    NUMBER = {8},
     PAGES = {1563--1576},
      ISSN = {0022-4049,1873-1376},
   MRCLASS = {17B35},
  MRNUMBER = {3175041},
MRREVIEWER = {Aleksandr\ A.\ Mikhalev},
       DOI = {10.1016/j.jpaa.2013.12.008},
       URL = {https://doi.org/10.1016/j.jpaa.2013.12.008},
}

@article {Chamberlin,
    AUTHOR = {Chamberlin, Samuel},
     TITLE = {Integral bases for the universal enveloping algebras of map
              algebras},
   JOURNAL = {J. Algebra},
  FJOURNAL = {Journal of Algebra},
    VOLUME = {377},
      YEAR = {2013},
     PAGES = {232--249},
      ISSN = {0021-8693,1090-266X},
   MRCLASS = {17B35},
  MRNUMBER = {3008904},
MRREVIEWER = {Dmitry\ V.\ Artamonov},
       DOI = {10.1016/j.jalgebra.2012.11.046},
       URL = {https://doi.org/10.1016/j.jalgebra.2012.11.046},
}

@article {ChariPressley,
    AUTHOR = {Chari, Vyjayanthi and Pressley, Andrew},
     TITLE = {Quantum affine algebras and affine {H}ecke algebras},
   JOURNAL = {Pacific J. Math.},
  FJOURNAL = {Pacific Journal of Mathematics},
    VOLUME = {174},
      YEAR = {1996},
    NUMBER = {2},
     PAGES = {295--326},
      ISSN = {0030-8730,1945-5844},
   MRCLASS = {17B37},
  MRNUMBER = {1405590},
MRREVIEWER = {A.\ U.\ Klimyk},
       URL = {http://projecteuclid.org/euclid.pjm/1102365173},
}

@article {GJKK,
    AUTHOR = {Grantcharov, Dimitar and Jung, Ji Hye and Kang, Seok-Jin and
              Kim, Myungho},
     TITLE = {Highest weight modules over quantum queer superalgebra
              {$U_q(\mathfrak{q}(n))$}},
   JOURNAL = {Comm. Math. Phys.},
  FJOURNAL = {Communications in Mathematical Physics},
    VOLUME = {296},
      YEAR = {2010},
    NUMBER = {3},
     PAGES = {827--860},
      ISSN = {0010-3616,1432-0916},
   MRCLASS = {17B37},
  MRNUMBER = {2628823},
MRREVIEWER = {Helena\ Albuquerque},
       DOI = {10.1007/s00220-009-0962-6},
       URL = {https://doi.org/10.1007/s00220-009-0962-6},
}

@article {SVV,
    AUTHOR = {Shan, P. and Varagnolo, M. and Vasserot, E.},
     TITLE = {On the center of quiver {H}ecke algebras},
   JOURNAL = {Duke Math. J.},
  FJOURNAL = {Duke Mathematical Journal},
    VOLUME = {166},
      YEAR = {2017},
    NUMBER = {6},
     PAGES = {1005--1101},
      ISSN = {0012-7094,1547-7398},
   MRCLASS = {20C08 (14D21 17B67 18E05)},
  MRNUMBER = {3635899},
MRREVIEWER = {Changlong\ Zhong},
       DOI = {10.1215/00127094-3792705},
       URL = {https://doi.org/10.1215/00127094-3792705},
}

@article {BHLW,
    AUTHOR = {Beliakova, Anna and Habiro, Kazuo and Lauda, Aaron D. and
              Webster, Ben},
     TITLE = {Current algebras and categorified quantum groups},
   JOURNAL = {J. Lond. Math. Soc. (2)},
  FJOURNAL = {Journal of the London Mathematical Society. Second Series},
    VOLUME = {95},
      YEAR = {2017},
    NUMBER = {1},
     PAGES = {248--276},
      ISSN = {0024-6107,1469-7750},
   MRCLASS = {17B67 (17B10 18D05)},
  MRNUMBER = {3653092},
MRREVIEWER = {Joshua\ Sussan},
       DOI = {10.1112/jlms.12001},
       URL = {https://doi.org/10.1112/jlms.12001},
}

@article {GrigsbyLicataWehrli,
    AUTHOR = {Grigsby, J. Elisenda and Licata, Anthony M. and Wehrli,
              Stephan M.},
     TITLE = {Annular {K}hovanov homology and knotted {S}chur-{W}eyl
              representations},
   JOURNAL = {Compos. Math.},
  FJOURNAL = {Compositio Mathematica},
    VOLUME = {154},
      YEAR = {2018},
    NUMBER = {3},
     PAGES = {459--502},
      ISSN = {0010-437X,1570-5846},
   MRCLASS = {57M27 (17B37 57M25 81R50)},
  MRNUMBER = {3731256},
MRREVIEWER = {Matthew\ Stoffregen},
       DOI = {10.1112/S0010437X17007540},
       URL = {https://doi.org/10.1112/S0010437X17007540},
}

@article {HoytSerganova,
    AUTHOR = {Hoyt, Crystal and Serganova, Vera},
     TITLE = {Classification of finite-growth general {K}ac-{M}oody
              superalgebras},
   JOURNAL = {Comm. Algebra},
  FJOURNAL = {Communications in Algebra},
    VOLUME = {35},
      YEAR = {2007},
    NUMBER = {3},
     PAGES = {851--874},
      ISSN = {0092-7872,1532-4125},
   MRCLASS = {17B67},
  MRNUMBER = {2305236},
MRREVIEWER = {Jacob\ Greenstein},
       DOI = {10.1080/00927870601115781},
       URL = {https://doi.org/10.1080/00927870601115781},
}

@article {GorelikSerganova,
    AUTHOR = {Gorelik, Maria and Serganova, Vera},
     TITLE = {On representations of the affine superalgebra {${\mathfrak{q}}(n)^{(2)}$}},
   JOURNAL = {Mosc. Math. J.},
  FJOURNAL = {Moscow Mathematical Journal},
    VOLUME = {8},
      YEAR = {2008},
    NUMBER = {1},
     PAGES = {91--109, 184},
      ISSN = {1609-3321,1609-4514},
   MRCLASS = {17B67 (17B10)},
  MRNUMBER = {2422268},
MRREVIEWER = {Jos\'e\ Carlos\ Santos},
       DOI = {10.17323/1609-4514-2008-8-1-91-109},
       URL = {https://doi.org/10.17323/1609-4514-2008-8-1-91-109},
}

@article {Nazarov,
    AUTHOR = {Nazarov, Maxim},
     TITLE = {Yangian of the queer {L}ie superalgebra},
   JOURNAL = {Comm. Math. Phys.},
  FJOURNAL = {Communications in Mathematical Physics},
    VOLUME = {208},
      YEAR = {1999},
    NUMBER = {1},
     PAGES = {195--223},
      ISSN = {0010-3616,1432-0916},
   MRCLASS = {17B37 (81R50)},
  MRNUMBER = {1729884},
       DOI = {10.1007/s002200050754},
       URL = {https://doi.org/10.1007/s002200050754},
}

@article {ChenGuay,
    AUTHOR = {Chen, Hongjia and Guay, Nicolas},
     TITLE = {Twisted affine {L}ie superalgebra of type {$Q$} and
              quantization of its enveloping superalgebra},
   JOURNAL = {Math. Z.},
  FJOURNAL = {Mathematische Zeitschrift},
    VOLUME = {272},
      YEAR = {2012},
    NUMBER = {1-2},
     PAGES = {317--347},
      ISSN = {0025-5874,1432-1823},
   MRCLASS = {17B37},
  MRNUMBER = {2968227},
MRREVIEWER = {Aleksandr\ Nikolaevich\ Sergeev},
       DOI = {10.1007/s00209-011-0935-2},
       URL = {https://doi.org/10.1007/s00209-011-0935-2},
}

@article{BK,
	author = {Brundan, Jonathan and Kleshchev, Alexander},
	date-added = {2023-06-15 09:11:51 -0500},
	date-modified = {2023-06-15 09:11:51 -0500},
	doi = {10.1007/s002090100282},
	fjournal = {Mathematische Zeitschrift},
	issn = {0025-5874},
	journal = {Math. Z.},
	mrclass = {20C25 (17B10 20C30)},
	mrnumber = {1879328},
	mrreviewer = {Christine Bessenrodt},
	number = {1},
	pages = {27--68},
	title = {Projective representations of symmetric groups via {S}ergeev duality},
	url = {https://doi.org/10.1007/s002090100282},
	volume = {239},
	year = {2002},
	bdsk-url-1 = {https://doi.org/10.1007/s002090100282}}

\end{document}